\documentclass[a4paper,oneside,reqno]{amsart}
\usepackage{amsopn} 
\usepackage{amsthm}
\usepackage{amstext}
\usepackage{amssymb}
\usepackage[british]{babel}
\usepackage[UKenglish]{isodate}
\usepackage{graphicx}
\usepackage{esint}
\usepackage{tikz}
\usepackage{textcomp}
\usepackage{xfrac}
\usepackage{color}
\usepackage{ifthen}

\numberwithin{equation}{section}
\numberwithin{figure}{section}

\definecolor{grey}{rgb}{0.7,0.7,0.7}
\newcommand{\grey}{\color{grey}}
\newcommand{\black}{\color{black}}



\newtheorem{theorem}{Theorem}[section]
\newtheorem{proposition}[theorem]{Proposition}
\newtheorem{lemma}[theorem]{Lemma}

\theoremstyle{remark}
\newtheorem{rem}[theorem]{Remark}
\newenvironment{remark}{\begin{rem}}{\hfill$\lozenge$
\end{rem}}

\DeclareMathOperator\Real{Re}
\DeclareMathOperator\Imag{Im}
\renewcommand{\Re}{\Real}
\renewcommand{\Im}{\Imag}
\DeclareMathOperator\dom{dom}

\DeclareMathOperator\Log{Log}
\DeclareMathOperator\Li{Li}

\newcommand{\ov}{\overline}
\newcommand{\eps}{\varepsilon}
\newcommand{\rmO}{\mathrm{O}}
\newcommand{\rmo}{\mathrm{o}}
\newcommand{\rd}{\mathrm{d}}
\newcommand{\mr}{\mathring}

\newcommand{\LMs}[1]{L_{\mathrm{M},#1}}
\newcommand{\LMsN}[2]{L_{\mathrm{M},#1,#2}}
\newcommand{\lns}{l}
\newcommand{\lk}{\lns_k}
\newcommand{\lkarb}[1]{\lns_{#1}}
\newcommand{\lMs}[1]{\lns_{\mathrm{M},#1}}
\newcommand{\lMsN}[2]{\lns_{\mathrm{M},#1,#2}}
\newcommand{\Rsin}{R_{\rm sin}}
\newcommand{\bfe}{\mathbf{e}}
\newcommand{\FWHM}{\operatorname{FWHM}}

\newcommand{\cF}{\mathcal F}

\newcommand{\CC}{\mathbb C}
\newcommand{\NN}{\mathbb N}

\newcommand{\RR}{\mathbb R}
\newcommand{\ZZ}{\mathbb Z}

\newcommand{\defeq}{\mathrel{\mathop:}=}

\newcommand{\eqdef}{=\mathrel{\mathop:}}

\newcommand{\void}[2]{\ifthenelse{#1=0}{}{\ifthenelse{#1=1}{\grey #2 \black}{#2}}}









\bibliographystyle{elsarticle-num}

\begin{document}


\title[Path Laplacian operators and superdiffusive processes on graphs. II]{Path
Laplacian operators and superdiffusive processes on graphs.
II. Two-dimensional lattice}

\author{Ernesto Estrada}
\address{Department of Mathematics and Statistics, University of Strathclyde, \newline
26 Richmond Street, Glasgow G1 1XH, UK}
\email{ernesto.estrada@strath.ac.uk }
\author{Ehsan Hameed}
\address{Department of Mathematics and Statistics, University of Strathclyde, \newline
26 Richmond Street, Glasgow G1 1XH, UK}
\email{ehsan.hameed@strath.ac.uk }
\author{Matthias Langer}
\address{Department of Mathematics and Statistics, University of Strathclyde, \newline
26 Richmond Street, Glasgow G1 1XH, UK}
\email{m.langer@strath.ac.uk }
\author{Aleksandra Puchalska}
\address{Institute of Mathematics, \L{}\'od\'z University of Technology, \newline
Ul.\ W\'olcza\'nska 215, 90-924\L{}\'od\'z, Poland,}
\address{Institute of Applied Mathematics and Mechanics, University of Warsaw, \newline
Banacha 2, 02-097 Warsaw, Poland}
\email{aleksandrafalkiewicz@gmail.com}

\begin{abstract}
In this paper we consider a generalized diffusion equation on a square lattice
corresponding to Mellin transforms of the $k$-path Laplacian.
In particular, we prove that superdiffusion occurs when the parameter $s$
in the Mellin transform is in the interval $(2,4)$ and that normal diffusion
prevails when $s>4$.
\end{abstract}

\keywords{$k$-path Laplacian, anomalous diffusion, square lattice}
\subjclass[2010]{47B39; 60J60, 05C81}



\maketitle

\section{Introduction}

\noindent
Many physical systems are best represented by graphs $G=\left(V,E\right)$,
where the set of nodes (vertices) $V$ represents the entities of
the system and the set of edges $E$ describes the interactions between
these entities \cite{Graphs in Physics}. Among those systems we can
mention atomic and molecular ones as well as complex networks, which
include a vast range of complex systems embracing biological, social,
ecological, infrastructural and technological ones. Diffusion-like
processes, such as diffusion, reaction-diffusion, synchronization,
epidemic spreading, etc., are ubiquitous in those previously mentioned
systems \cite{Dynamics on networks}.  Apart from the normal diffusive
processes, where the mean square displacement (MSD) of the diffusive
particle scales linearly with time, there are many real-world examples
where anomalous diffusion takes place. In these anomalous diffusive
processes, MSD scales nonlinearly with time giving rise to subdiffusive
and superdiffusive processes \cite{metzler_klafter_review}.

In Part I \cite{PartI} of this series we introduced a new theoretical framework
to study superdiffusive processes on graphs.
In that work we considered transformations of the
so-called $k$-path Laplace operators $L_k$.
The latter are defined in a similar way as the standard graph Laplacian, but
they take only nodes into account whose distance is equal to $k$;
here the distance is measured as the length of the shortest path connecting two nodes.
Hence $L_k$ describes hops to nodes at distance $k$.
The above mentioned transformations of $L_k$ are combinations of
the form $\sum_{k=1}^\infty c_k L_k$ with some non-negative coefficients $c_k$.
This combination describes interactions with all nodes where different strengths
are used for nodes at different distances.  In general, one uses a sequence $c_k$
that is decreasing in $k$.
In particular, in \cite{PartI} we considered the Mellin transform of $L_k$,
which is obtained by choosing $c_k=k^{-s}$ with some positive parameter $s$.
The choice of the transformation has proved to be crucial in determining
the diffusive behaviour.  In \cite{PartI} we studied, in particular, the one-dimensional
path graph.  We proved that superdiffusion appears when a Mellin transform
of the $k$-path Laplace operators is considered with $s$ satisfying $1<s<3$, while
for $s>3$ normal diffusion is obtained; the latter occurs also if one considers
different transformations of $L_k$ like the Laplace and factorial transforms.

This new method adds new values to the already
existing ones for modelling anomalous diffusion. Among such existing
methods we should mention the use of random walks with L\'evy flights (RWLF)
\cite{Anomalous_Levy flights,Levy_flights_2,Levy_flights_3,Levy_flights_4}
and the use of the fractional diffusion equation (FDE)
\cite{fractional diffusion graphs,FDE_2,FDE_3,FDE_4}.
While the first method is easy to use for computer simulations, the
second is preferred for analytical studies.  However, there are different
types of definitions of fractional derivatives, such as the Caputo
fractional operator and the Riemann--Liouville fractional
operator \cite{Podlubny}, which then have different interpretations
and adapt differently to the different physical phenomena studied
with them (see \cite{fractional diffusion_1,fractional diffusion_2}).
The $k$-path Laplace operators allow
the derivation of analytical results as the FDE but use a unique
framework which is very similar to the one traditionally used in graph
and network theory.  It also allows an easy computational implementation
in the form of a random multi-hopper on graphs~\cite{multi-hopper}.

The goal of the current work is to study the solutions of
the generalized diffusion equation in 2D graphs. In particular, we
focus our attention on the abstract Cauchy problem in an infinite
square lattice. Square lattices are ubiquitous in many real-world
physical systems. It is frequently used to describe the spin-$1/2$
antiferromagnetic Heisenberg model in a variety of materials
\cite{square lattice_1,square_lattice 2,square_lattice 3,square_lattice 4}.
It is also the preferred model for two-dimensional (2D) gases and
optical lattices \cite{2D gases_1,2D gases_2,2D gases_3,optical lattice_1,optical lattice_2}.
Recently, square lattices of superconducting qubits have been used
for error correcting codes in quantum computers \cite{quantum computer}.
A very interesting discovery has been the experimental finding that
the native architecture of certain photosynthetic membranes have square
lattice shapes \cite{LH_1,LH_2,LH_3}. This finding is very relevant
for the current work as the existence of long-range interactions (LRI)
is well documented for light-harvesting complexes \cite{LH_LRI_1,LH_LRI_2,LH_LRI_3}.
The existence of LRI like the ones mathematically described by the
$k$-path Laplace operators considered here are well documented for
other systems previously mentioned here, such as cold atomic clouds,
helium Rydberg atoms and cold Rydberg gases \cite{LRI_1,LRI_2,LRI_3}.
Note also that anomalous diffusion has been observed for ultracold atoms
in 2D and 3D lattices \cite{schneider_etal12}.
Consequently, the study of a generalized diffusion model on square
lattices and proving the conditions for which superdiffusive behaviour
exists on them is of great theoretical importance due to the many
physical processes involved.

The main result of the current paper is contained in Theorem~\ref{th:super_diffusion},
which describes the asymptotic behaviour of the generalized diffusion equation
corresponding to the Mellin-transformed $k$-path Laplacian.
We prove that superdiffusion occurs when $2<s<4$ and that normal diffusion
prevails when $s>4$.
More precisely, we consider the time evolution of the solution of the generalized
diffusion equation with initial condition concentrated at one point.
As time $t$ tends to infinity, the spread of the solution (e.g.\ measured by
the full width at half maximum) grows like $t^\kappa$ with $\kappa=\frac{1}{2}$
when $s>4$, which is normal diffusion, and with $\kappa>\frac{1}{2}$
when $2<s<4$, which is a superdiffusive behaviour.

Let us give a brief outline of the contents of the paper:
in Section~\ref{sec:prelim} we recall results from Part~I \cite{PartI}.
In Section~\ref{sec:time_evolution} we study the solution of the generalized
diffusion equation and give an integral representation (Theorem~\ref{th:sol_evol_equ}).
Finally, in Section~\ref{sec:superdiffusion} we investigate the asymptotic
behaviour of the solution as time tends to infinity.
In particular, we formulate and prove or main result (Theorem~\ref{th:super_diffusion}).
Finally, we examine the behaviour of finite truncations, $\sum_{k=1}^N k^{-s}L_k$
of the Mellin transforms.  Although normal diffusion occurs in this case,
the diffusion speed can be made arbitrarily large
if $s\in(2,4)$ and $N$ is large enough; see Remark~\ref{re:finite_truncations}.

\section{Preliminaries}
\label{sec:prelim}

\noindent
At the beginning let us briefly recall some of the results given in the
first part \cite{PartI} of this article.
Let $\Gamma=(V,E)$ be an undirected, locally finite graph with set of vertices $V$
and set of edges $E$.
Moreover, let $d$ be the distance metric on $V$, i.e.\ let $d(v,w)$ be the
length of the shortest path from $v$ to $w$, and let $\delta_k(v)$ be the
$k$-path degree of a vertex $v\in V$:
\begin{equation}\label{pathdegr}
  \delta_k(v) = \#\{w\in V: d(v,w)=k\}.
\end{equation}
Let $\ell^{2}(V)$ be the Hilbert space of square-summable functions on $V$
with inner product
\[
  \langle f,g\rangle = \sum_{v\in V}f(v)\ov{g(v)},\qquad f,g\in\ell^{2}(V).
\]
For $k\in\NN$ we consider the $k$-path Laplacian, which is an operator
in $\ell^2(V)$ and defined by
\begin{equation}\label{eq:path_Laplacian}
  \bigl(L_{k}f\bigr)(v) \defeq \sum_{w\in V:\,d(v,w)=k}\bigl(f(v)-f(w)\bigr),
  \qquad f\in\dom(L_k),
\end{equation}
with maximal domain $\dom(L_k)$, i.e.\
\[
  \dom(L_k) = \Biggl\{f\in\ell^2(V):
  \sum_{v\in V}\bigg|\sum_{w\in V:\,d(v,w)=k}\bigl(f(v)-f(w)\bigr)\bigg|^2 < \infty\Biggr\}.
\]
The following properties were proved in the first part of the paper.

\begin{theorem}{\rm \cite[Theorem~2.2]{PartI}}
For each $k\in\NN$ the $k$-path Laplacian $L_k$ is a self-adjoined operator in $\ell^2(V)$.
Furthermore, the operator $L_k$ is bounded if and only if the
function $\delta_k: V \to \NN$ is bounded.
\end{theorem}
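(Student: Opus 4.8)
The plan is to treat the two assertions separately, using the pointwise decomposition $L_k=D_k-A_k$, where $D_k$ is multiplication by the $k$-path degree $\delta_k$ and $A_k$ has the symmetric kernel $a_{vw}=1$ when $d(v,w)=k$ and $a_{vw}=0$ otherwise. Local finiteness guarantees $\delta_k(v)<\infty$ for every $v$, so each row of this kernel is finite and $(L_kf)(v)$ is a well-defined finite sum for any $f\in\ell^2(V)$. Restricting to the space $c_c(V)$ of finitely supported functions and symmetrising the double sum, one obtains $\langle L_kf,g\rangle=\tfrac12\sum_{d(v,w)=k}\bigl(f(v)-f(w)\bigr)\overline{\bigl(g(v)-g(w)\bigr)}$ for $f,g\in c_c(V)$, which is conjugate-symmetric in $f$ and $g$; hence $L_k$ restricted to $c_c(V)$ is symmetric and non-negative.

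For self-adjointness I would first identify the maximal operator with the adjoint of this minimal one. Using the symmetry of the kernel together with local finiteness, a direct computation shows that $g$ lies in the domain of $\bigl(L_k|_{c_c(V)}\bigr)^*$ if and only if the pointwise function $L_kg$ belongs to $\ell^2(V)$, that is, if and only if $g\in\dom(L_k)$; thus $\bigl(L_k|_{c_c(V)}\bigr)^*=L_k$. In particular $L_k$ is closed and $L_k^*$ equals the closure of $L_k|_{c_c(V)}$, so that self-adjointness of $L_k$ is equivalent to essential self-adjointness of $L_k|_{c_c(V)}$, i.e.\ to the vanishing of both deficiency indices. It therefore suffices to prove that $g\in\ell^2(V)$ together with $L_kg=\pm\mathrm{i}\,g$ forces $g=0$.

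This deficiency computation is the heart of the matter and the step I expect to be the main obstacle. I would apply a discrete Green's identity on an increasing exhaustion $B_R\uparrow V$ by finite sets: on each $B_R$ the quantity $\langle L_kg,g\rangle_{B_R}-\langle g,L_kg\rangle_{B_R}$ collapses to a boundary term, a sum over pairs $\{v,w\}$ with $d(v,w)=k$ that straddle $\partial B_R$. The crucial point is to show that this boundary term tends to $0$ along a suitable subsequence $R_j\to\infty$; here the counting measure and the finiteness of $\sum_v|g(v)|^2$ are exactly what force the tail of the boundary contribution to be small, and this is where local finiteness must be exploited carefully. Passing to the limit then gives $\langle L_kg,g\rangle=\langle g,L_kg\rangle$, whence $\pm2\mathrm{i}\,\|g\|^2=0$ and $g=0$; both deficiency indices vanish and $L_k=L_k^*$.

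It remains to prove the boundedness dichotomy. If $\delta_k$ is bounded by some $M$, then $D_k$ is bounded with $\|D_k\|\le M$, while Schur's test applied to the symmetric kernel $a_{vw}$ — whose row sums and column sums all equal $\delta_k(v)\le M$ — yields $\|A_k\|\le M$, so that $L_k$ is bounded. Conversely, if $\delta_k$ is unbounded, I would pick vertices $v_n$ with $\delta_k(v_n)\to\infty$ and test $L_k$ on the finitely supported unit vectors $e_{v_n}$: since $(L_ke_{v_n})(v_n)=\delta_k(v_n)$ and $(L_ke_{v_n})(w)=-1$ for each of the $\delta_k(v_n)$ vertices $w$ with $d(v_n,w)=k$, one computes $\|L_ke_{v_n}\|^2=\delta_k(v_n)^2+\delta_k(v_n)\ge\delta_k(v_n)^2\to\infty$. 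Hence $L_k$ is unbounded, which completes the equivalence.
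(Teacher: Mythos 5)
This theorem is imported from Part~I (it is stated here with a citation to \cite[Theorem~2.2]{PartI} and not reproved), so the comparison is with the standard argument behind that result. Your skeleton is sound and matches the usual route: the decomposition $L_k=D_k-A_k$, symmetry and non-negativity of the form on $c_c(V)$, the identification $\bigl(L_k|_{c_c(V)}\bigr)^*=L_k$ with the maximal operator (your pointwise computation with the vectors $e_v$ is exactly right), the reduction of self-adjointness of the maximal operator to essential self-adjointness of the minimal one, and the boundedness dichotomy. The boundedness part is complete and correct: Schur's test gives $\|A_k\|\le M$ when $\delta_k\le M$, and $\|L_ke_{v_n}\|^2=\delta_k(v_n)^2+\delta_k(v_n)$ is precisely the computation needed for the converse.

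The step you yourself flag as the heart of the matter is, however, a genuine gap: the vanishing of the Green's-identity boundary terms cannot be proved from the ingredients you invoke ($g\in\ell^2$ for counting measure plus local finiteness). Note that the diagonal part $D_k$ drops out of the commutator $\langle L_kg,g\rangle_{B}-\langle g,L_kg\rangle_{B}$ over a finite set $B$, so your boundary term is identical to the one for the distance-$k$ adjacency operator $A_k$ alone. If $\ell^2$-summability and local finiteness forced the boundary term to vanish along some exhaustion, the same argument would prove essential self-adjointness of adjacency operators on all locally finite graphs --- and that is false: there are locally finite trees whose adjacency matrix has deficiency indices $(1,1)$ (Goł\'enia--Schumacher). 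Concretely, if $A_k g=ig$ pointwise with $g\ne0$, the Green identity over finite $B$ reads $2i\|g\|_B^2=T_B$, so $|T_B|\to 2\|g\|^2>0$ and the boundary term vanishes along \emph{no} exhaustion; proving it vanishes for the Laplacian is therefore circular, as it amounts to knowing $g=0$ already. Any correct proof must use the specific Laplacian structure, namely that the diagonal entry $\delta_k(v)$ equals the off-diagonal row sum, and then your step becomes elementary via a maximum principle: if $L_kg=\pm ig$ pointwise with $g\in\ell^2(V)$, then $\bigl(\delta_k(v)\mp i\bigr)g(v)=\sum_{w:\,d(v,w)=k}g(w)$; since $g\in\ell^2$ with counting measure, $|g|$ attains its maximum at some $v_0$ if $g\ne0$, and evaluating there gives $\sqrt{\delta_k(v_0)^2+1}\,|g(v_0)|\le\delta_k(v_0)\,|g(v_0)|$, a contradiction. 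Hence both deficiency spaces are trivial and $L_k$ is self-adjoint. (Equivalently, since $L_k|_{c_c(V)}\ge0$, it suffices to show $\ker(L_k+1)=\{0\}$ for the maximal operator by the same pointwise argument; this is the classical Wojciechowski--Jorgensen proof that combinatorial Laplacians of locally finite graphs are essentially self-adjoint.) With that replacement your proposal becomes a complete proof.
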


Now let us consider an Abstract Cauchy Problem of the form
\begin{equation}\label{Cauchy}
  u'(t)=-Lu(t), \qquad u(0)=\mathring{u},
\end{equation}
where $L$ is some operator in $\ell^2(V)$.
Similarly to the classical description of Brownian motion,
the solution to the system \eqref{Cauchy} with $L=L_k$, when rescaled properly,
converges to the normal distribution as time tends to infinity.
In order to build the model in which interaction among all vertices in a graph
that are joined by a path are taken into account, we use the
differential equation \eqref{Cauchy} with an operator $L$
given by a transformed $k$-path Laplacian operator:
\begin{equation}\label{k-pathLap}
  L = \sum_{k=1}^\infty c_k L_k
\end{equation}
with some coefficients $c_k\in \CC$.

The main goal is to examine the existence of superdiffusion in the process
described by \eqref{Cauchy} with an operator $L$ as in \eqref{k-pathLap}.
In \cite{PartI} we considered three transforms: the Laplace, the factorial
and the Mellin transforms, which differ in the rate of convergence to zero
of their coefficients.
It appeared that for the first two the probabilities of big jumps are too small
for superdiffusion to arise and a significant result happens only
for the Mellin transform.
In the current paper we therefore concentrate on the Mellin transform.
Let us recall the definition and some properties of the latter in the following
theorem from \cite{PartI}.

\begin{theorem}\label{proptrk-path}{\rm \cite[Theorem~3.1]{PartI}}
Let us consider an infinite graph $\Gamma$ which is locally finite and
such that its $k$-path degree $\delta_k$, defined in \eqref{pathdegr},
satisfies the condition
\begin{equation}\label{deltapolyn}
  \delta_{k,\max} \defeq \max\{\delta_k(v): v\in V\} \le Ck^\alpha
\end{equation}
for some $\alpha\ge0$ and $C>0$.  Then the \emph{Mellin-transformed $k$-path Laplacian}
\begin{equation}\label{deftLM}
  \LMs{s} \defeq \sum_{k=1}^\infty \frac{1}{k^s}L_k
\end{equation}
is a well-defined, bounded operator in $\ell^2(V)$ for $s\in\CC$ with $\Re s>\alpha+1$,
and the series in \eqref{deftLM} converges in the operator norm.
\end{theorem}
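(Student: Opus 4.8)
The plan is to bound the operator norm of each individual $L_k$ in terms of $\delta_{k,\max}$ and then to show that the scalar series $\sum_k k^{-\Re s}\|L_k\|$ is summable under the hypothesis $\Re s>\alpha+1$. This makes the series \eqref{deftLM} absolutely convergent in the Banach algebra $\cB(\ell^2(V))$ of bounded operators, and completeness of the latter then produces a bounded limit operator, which is exactly what is claimed.

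First I would establish the norm estimate $\|L_k\|\le 2\delta_{k,\max}$. To this end I split $L_k=D_k-A_k$, where $D_k$ is the diagonal multiplication operator $(D_kf)(v)=\delta_k(v)f(v)$ and $A_k$ is the distance-$k$ adjacency operator $(A_kf)(v)=\sum_{w:\,d(v,w)=k}f(w)$. Since $\delta_k(v)\le\delta_{k,\max}$ for every $v$, the diagonal part satisfies $\|D_k\|\le\delta_{k,\max}$. For $A_k$ I would invoke the Schur test: its matrix entries are $0$ or $1$, and because the distance relation is symmetric, every row sum and every column sum equals $\delta_k(v)\le\delta_{k,\max}$, whence $\|A_k\|\le\delta_{k,\max}$. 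The triangle inequality then yields $\|L_k\|\le2\delta_{k,\max}$. (Alternatively, since $L_k$ is the graph Laplacian of the distance-$k$ graph, one may use the spectral bound $0\le L_k\le 2\delta_{k,\max}$ directly.) Note that this estimate is finite precisely because $\delta_k$ is bounded under \eqref{deltapolyn}; in particular the preceding theorem already guarantees that each $L_k$ is a bounded operator, so the individual summands are well defined.

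Next, inserting the hypothesis \eqref{deltapolyn} gives $\|L_k\|\le 2Ck^\alpha$, and therefore, for $s\in\CC$ with $\Re s>\alpha+1$,
\[
  \sum_{k=1}^\infty\Bigl\|\frac{1}{k^s}L_k\Bigr\|
  = \sum_{k=1}^\infty\frac{\|L_k\|}{k^{\Re s}}
  \le 2C\sum_{k=1}^\infty\frac{1}{k^{\Re s-\alpha}} < \infty,
\]
since the exponent $\Re s-\alpha$ exceeds $1$ and we are comparing with a convergent $p$-series. Hence the series defining $\LMs{s}$ converges absolutely in operator norm; by completeness of $\cB(\ell^2(V))$ its partial sums converge in norm to a bounded operator, establishing every assertion of the statement.

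The only genuinely technical ingredient is the norm estimate for the off-diagonal part $A_k$; the rest is a routine comparison with a convergent series. I expect the Schur test to dispose of $A_k$ cleanly, since the distance-$k$ relation is symmetric and unweighted, so the relevant row and column sums coincide with the $k$-path degree, which is exactly the quantity controlled by the growth assumption \eqref{deltapolyn}.
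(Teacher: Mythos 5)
Your proposal is correct and is in substance the same argument as the paper's (which is quoted here from Part~I): one bounds each summand via $\|L_k\|\le 2\delta_{k,\max}\le 2Ck^\alpha$ (there obtained from the splitting of $L_k$ into its degree and distance-$k$ adjacency parts, just as in your Schur-test step) and then concludes absolute convergence in $\cB(\ell^2(V))$ by comparison with $\sum_k k^{-(\Re s-\alpha)}$, which converges since $\Re s-\alpha>1$. Both routes you mention for the norm bound (Schur test, or the spectral estimate $0\le L_k\le 2\delta_{k,\max}I$ for the Laplacian of the distance-$k$ graph) are valid, and completeness of the operator algebra yields the bounded limit exactly as claimed.
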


One can easily find examples of graphs for which \eqref{deltapolyn} is
satisfied and hence the operator $\LMs{s}$ is bounded, e.g.\ a path graph
or a square lattice where $\delta_{k,\max}$ equals $2$ and $4k$, respectively.
On the other hand, condition \eqref{deltapolyn} is violated
for the Cayley trees with degree of the non-pendant node equal to $r\in\NN$, $r\ge3$,
for which $\delta_{k,\max}=r(r-1)^{k-1}$.

\section{Existence and time evolution of the Mellin transform of
the $k$-path Laplacian on the square lattice}
\label{sec:time_evolution}

\noindent
Let us consider the square lattice, i.e.\ the graph $\Gamma=P_\infty\times P_\infty=(V,E)$
with vertices $V=\ZZ^2$ and edges connecting vertices $(i,j)$ and $(m,n)$ when
$|i-m|+|j-n|=1$.  We usually write $(u_{x,y})_{x,y\in\ZZ}$ for functions on $V$.

On $P_\infty\times P_\infty$ the $k$-path Laplacian $L_k$,
defined in \eqref{eq:path_Laplacian}, is given by
\begin{align*}
  (L_k u)_{x,y} = 4ku_{x,y} - \sum_{j=0}^{k-1}\Bigl[u_{x+k-j,y+j} + u_{x-k+j,y-j}
  + u_{x-j,y+k-j} + u_{x+j,y-k+j}\Bigr], & \\
  x,y\in\ZZ,\; u\in\ell^2(V). &
\end{align*}
Clearly, $L_k$ is a bounded operator.
For $m,n\in\ZZ$ let $\sigma_{m,n}:\ell^2(V)\to\ell^2(V)$ be the shift operator
defined by
\[
  (\sigma_{m,n}u)_{x,y} = u_{x+m,y+n},  \qquad x,y\in\ZZ.
\]
Then $L_k$ can be written as
\begin{equation}\label{Lkshift}
  L_k = 4kI - \sum_{j=0}^{k-1} \bigl[\sigma_{k-j,j} + \sigma_{-k+j,-j}
  + \sigma_{-j,k-j} + \sigma_{j,-k+j}\bigr].
\end{equation}

Let us consider the following Fourier transform, which is a unitary operator and
which is defined by
\begin{align*}
  & \cF: \ell^2(V) \to L^2\bigl([-\pi,\pi]^2\bigr), \\[1ex]
  & (\cF u)(p,q) = \frac{1}{2\pi}\sum_{x,y\in\ZZ} u_{x,y}e^{ipx}e^{iqy},
  \qquad p,q\in[-\pi,\pi],\; u\in\ell^2(V),
\end{align*}
and whose inverse given by
\[
  (\cF^{-1}f)_{x,y} = \frac{1}{2\pi}\int_{-\pi}^\pi \int_{-\pi}^\pi
  f(p,q)e^{-ipx}e^{-iqy}\rd p\,\rd q,
  \qquad x,y\in\ZZ,\; f\in L^2\bigl([-\pi,\pi]^2\bigr).
\]

Since
\begin{align*}
  (\cF\sigma_{m,n}u)(p,q) &= \frac{1}{2\pi}\sum_{x,y\in\ZZ} u_{x+m,y+n}e^{ipx}e^{iqy}
  = \frac{1}{2\pi}\sum_{x,y\in\ZZ} u_{x,y}e^{ip(x-m)}e^{iq(y-n)} \\[1ex]
  &= e^{-ipm}e^{-iqn}(\cF u)(p,q),
\end{align*}
we have
\begin{equation}\label{FshiftF}
  \bigl(\cF\sigma_{m,n}\cF^{-1}f\bigr)(p,q) = e^{-i(pm+qn)}f(p,q),
  \quad p,q\in[-\pi,\pi],\; f\in L^2\bigl([-\pi,\pi]^2\bigr).
\end{equation}
Together with \eqref{Lkshift} we obtain that $L_k$ is unitarily equivalent
to a multiplication operator; more precisely, the following lemma is true.

\begin{lemma}\label{le:lk}
With the notations from above we have
\begin{equation}\label{FLkF}
  \bigl(\cF L_k\cF^{-1}f\bigr)(p,q) = \lk(p,q)f(p,q),
  \qquad p,q\in[-\pi,\pi],\; f\in L^2\bigl([-\pi,\pi]^2\bigr),
\end{equation}
where
\[
  \lk(p,q) = \begin{cases}
    4k - i\dfrac{\sin p\cdot\bigl(e^{ikp}-e^{-ikp}\bigr)
    - \sin q\cdot\bigl(e^{ikq}-e^{-ikq}\bigr)}{\cos p-\cos q}\,, & |p|\ne|q|, \\[3ex]
    4k + i\cot p\cdot \bigl(e^{ikp}-e^{-ikp}\bigr) - k\bigl(e^{ikp}+e^{-ikp}\bigr),
    & |p|=|q|\ne0,\pi, \\[2ex]
    0, & p=q=0, \\[2ex]
    4k\bigl(1-(-1)^k\bigr), & |p|=|q|=\pi.
  \end{cases}
\]
Moreover, $\lk$ is continuous and even in both $p$ and $q$,
and the following inequalities hold:
\begin{alignat}{2}
  & 0 \le \lk(p,q) \le 8k, \qquad & & p,q\in[-\pi,\pi],
    \label{boundslk}\\[1ex]
  & \lkarb{1}(p,q) > 0, \qquad & & (p,q)\in[-\pi,\pi]^2\setminus\{(0,0)\}.
    \label{l1pos}
\end{alignat}
\end{lemma}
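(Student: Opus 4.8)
The plan is to compute the Fourier symbol of $L_k$ directly and then read off every stated property from it. First I would apply $\cF\,\cdot\,\cF^{-1}$ to the representation \eqref{Lkshift}, using \eqref{FshiftF} on each shift: since conjugating $\sigma_{m,n}$ turns it into multiplication by $e^{-i(pm+qn)}$, formula \eqref{FLkF} holds with
\begin{equation*}
  \lk(p,q) = 4k - \sum_{j=0}^{k-1}\Bigl[e^{-i(p(k-j)+qj)} + e^{i(p(k-j)+qj)}
  + e^{-i(-pj+q(k-j))} + e^{i(-pj+q(k-j))}\Bigr].
\end{equation*}
The $4k$ exponents here are exactly the quantities $pw_1+qw_2$ as $w=(w_1,w_2)$ runs over the sphere $\{w\in V:d((0,0),w)=k\}$, which has $\delta_k=4k$ points and is invariant under $w\mapsto-w$ and under flipping the sign of either coordinate. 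Pairing $w$ with $-w$ collapses the exponentials into cosines, giving
\begin{equation*}
  \lk(p,q) = 4k - 2\sum_{j=0}^{k-1}\Bigl[\cos\bigl(pk+(q-p)j\bigr)
  + \cos\bigl(qk-(p+q)j\bigr)\Bigr],
\end{equation*}
which is real. This already settles three claims simultaneously: $\lk$ is a trigonometric polynomial, hence continuous on $[-\pi,\pi]^2$; the invariance of the sphere under $w_1\mapsto-w_1$ (resp.\ $w_2\mapsto-w_2$), after reindexing, shows $\lk$ is even in $p$ and in $q$; and since the subtracted quantity equals $\sum_{w}\cos(pw_1+qw_2)$ over the $4k$ sphere points, each term lying in $[-1,1]$, the sum lies in $[-4k,4k]$ and \eqref{boundslk} follows.

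Next I would produce the closed forms by evaluating the two geometric sums $\sum_{j=0}^{k-1}e^{i(q-p)j}$ and $\sum_{j=0}^{k-1}e^{-i(p+q)j}$. When $|p|\ne|q|$ both ratios $e^{i(q-p)}$ and $e^{-i(p+q)}$ differ from $1$, so each sum equals $(r^k-1)/(r-1)$; extracting real parts via the half-angle factorisation $e^{i\theta}-1=2ie^{i\theta/2}\sin\frac{\theta}{2}$ and product-to-sum identities, the two contributions consolidate over the common denominator $\cos p-\cos q$ into a multiple of $\sin p\sin kp-\sin q\sin kq$. Rewriting $2\sin p\sin kp=-i\sin p\,(e^{ikp}-e^{-ikp})$ then reproduces the first branch exactly. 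I expect this algebraic consolidation onto the single denominator $\cos p-\cos q$ to be the main obstacle, as it requires the right trigonometric regrouping rather than anything conceptual.

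Finally I would dispatch the degenerate cases. When $|p|=|q|\ne0,\pi$, i.e.\ $q=\pm p$, exactly one of the two ratios equals $1$, so that geometric sum degenerates to $k$ and yields the $k(e^{ikp}+e^{-ikp})$ term, while the other is summed as before and contributes $\cot p\,(e^{ikp}-e^{-ikp})$; direct evaluation gives the second branch (the evenness in $q$ makes the $q=p$ and $q=-p$ computations coincide). At $p=q=0$ every exponential equals $1$, the subtracted sum is $4k$, and $\lk=0$. At $|p|=|q|=\pi$ each cosine is $\cos\bigl(\pi(w_1\pm w_2)\bigr)=(-1)^{w_1+w_2}=(-1)^k$ on the sphere, so the sum is $4k(-1)^k$ and $\lk=4k\bigl(1-(-1)^k\bigr)$; each degenerate value also matches the limit of the generic branch, consistent with the continuity already shown. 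For \eqref{l1pos} it is fastest to specialise directly: for $k=1$ the sphere is $\{(\pm1,0),(0,\pm1)\}$, whence $\lkarb{1}(p,q)=2(1-\cos p)+2(1-\cos q)\ge0$, with equality precisely when $\cos p=\cos q=1$, i.e.\ only at $(p,q)=(0,0)$, so $\lkarb{1}>0$ on $[-\pi,\pi]^2\setminus\{(0,0)\}$.
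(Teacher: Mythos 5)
Your proof is correct, and although it starts the same way as the paper (conjugating the shift representation \eqref{Lkshift} through $\cF$ via \eqref{FshiftF} and evaluating geometric sums), it diverges in several genuine ways. For the degenerate branches the paper writes $\lk(p,q)=4k-i\frac{f(p)-f(q)}{g(p)-g(q)}$ with $f(p)=\sin p\,(e^{ikp}-e^{-ikp})$, $g(p)=\cos p$, and obtains the case $|p|=|q|\ne0,\pi$ by the Generalized Mean Value Theorem as the limit $q\to p$, with the value at $|p|=|q|=\pi$ as a further limit; you instead evaluate the degenerate geometric sums directly (the ratio equal to $1$ gives the $k(e^{ikp}+e^{-ikp})$ term) and use the parity $(-1)^{w_1+w_2}=(-1)^k$ on the $\ell^1$-sphere, which is more elementary and yields $4k-2k\cos kp-2\cot p\sin kp$, matching the stated branch. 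For \eqref{boundslk} the paper proves $\lk\ge0$ by citing the operator non-negativity of $L_k$ from Part~I, whereas your observation that $\lk(p,q)=4k-\sum_w\cos(pw_1+qw_2)$ over exactly $4k$ sphere points gives both inequalities at once by the triangle inequality, keeping the lemma self-contained; you also supply an actual symmetry argument for evenness where the paper calls it clear. The one step you leave sketchier than the paper is the consolidation over the denominator $\cos p-\cos q$; your target $\lk=4k+2\,\frac{\sin p\sin kp-\sin q\sin kq}{\cos p-\cos q}$ is the correct identity (at $k=1$ it reduces to $4-2(\cos p+\cos q)$, confirming your treatment of \eqref{l1pos}), and if the real-variable regrouping becomes unwieldy, the paper's exponential-bracket simplification $\frac{1}{1-e^{-ip+iq}}-\frac{1}{1-e^{ip+iq}}=\frac{i\sin p}{\cos p-\cos q}$ carries out exactly this step cleanly.
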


\begin{proof}
It follows from \eqref{Lkshift} and \eqref{FshiftF} that \eqref{FLkF} holds with
\begin{align}
  \lk(p,q) &= 4k - \sum_{j=0}^{k-1} \Bigl[e^{-i[(k-j)p+jq]} - e^{-i[(-k+j)p-jq]}
    \notag\\[1ex]
  &\hspace*{12ex} - e^{-i[-jp+(k-j)q]} - e^{-i[jp+(-k+j)q]}\Bigr]
    \notag\\[1ex]
  &= 4k - e^{-ikp}\sum_{j=0}^{k-1} e^{ij(p-q)} + e^{ikp}\sum_{j=0}^{k-1} e^{-ij(p-q)}
    \notag\\[1ex]
  &\quad + e^{-ikq}\sum_{j=0}^{k-1} e^{ij(p+q)} + e^{ikq}\sum_{j=0}^{k-1} e^{-ij(p+q)}.
    \label{lksums}
\end{align}
When $|p|\ne|q|$ we can rewrite this as follows:
\begin{align*}
  & \lk(p,q) = 4k - \frac{e^{-ikp}-e^{-ikq}}{1-e^{i(p-q)}}
  - \frac{e^{ikp}-e^{ikq}}{1-e^{-i(p-q)}}
  - \frac{e^{-ikq}-e^{ikp}}{1-e^{i(p+q)}}
  - \frac{e^{ikq}-e^{-ikp}}{1-e^{-i(p+q)}}
    \\[1ex]
  &= 4k - e^{ikp}\biggl(\frac{1}{1-e^{-ip+iq}} - \frac{1}{1-e^{ip+iq}}\biggr)
  - e^{-ikp}\biggl(\frac{1}{1-e^{ip-iq}} - \frac{1}{1-e^{-ip-iq}}\biggr)
    \\[1ex]
  &\quad + e^{ikq}\biggl(\frac{1}{1-e^{-ip+iq}} - \frac{1}{1-e^{-ip-iq}}\biggr)
  + e^{-ikq}\biggl(\frac{1}{1-e^{ip-iq}} - \frac{1}{1-e^{ip+iq}}\biggr).
\end{align*}
The expressions within the brackets can be simplified, e.g.\
\begin{align*}
  & \frac{1}{1-e^{-ip+iq}} - \frac{1}{1-e^{ip+iq}}
  = \frac{e^{-ip+iq}-e^{ip+iq}}{1-e^{-ip+iq}-e^{ip+iq}+e^{2iq}}
    \\[1ex]
  &= \frac{e^{-ip}-e^{ip}}{e^{-iq}-e^{-ip}-e^{ip}+e^{iq}}
  = \frac{i\sin p}{\cos p-\cos q}\,.
\end{align*}
Hence
\begin{align*}
  \lk(p,q) &= 4k - e^{ikp}\frac{i\sin p}{\cos p-\cos q}
  + e^{-ikp}\frac{i\sin p}{\cos p-\cos q}
    \\[1ex]
  &\quad + e^{ikq}\frac{i\sin q}{\cos p-\cos q} - e^{-ikq}\frac{i\sin q}{\cos p-\cos q}
    \\[1ex]
  &= 4k - \frac{i}{\cos p-\cos q}\Bigl[\sin p\cdot\bigl(e^{ikp}-e^{-ikp}\bigr)
  - \sin q\cdot\bigl(e^{ikq}-e^{-ikq}\bigr)\Bigr].
\end{align*}
For the case when $|p|=|q|$ note that $\lk$ is continuous by \eqref{lksums}.
Write $\lk$ as
\[
  \lk(p,q) = 4k-i\frac{f(p)-f(q)}{g(p)-g(q)}
\]
with $f(p)=\sin p\cdot(e^{ikp}-e^{-ikp})$ and $g(p)=\cos p$.
The Generalized Mean Value Theorem implies that
\[
  \lk(p,q) = 4k-i\frac{f'(\xi)}{g'(\xi)}
\]
with $\xi$ between $p$ and $q$.  Hence
\begin{align}
  \lk(p,p) &= \lim_{q\to p}\lk(p,q) = 4k - i\frac{f'(p)}{g'(p)}
    \notag\\[1ex]
  &= 4k - i\frac{\cos p\cdot(e^{ikp}-e^{-ikp})+ik\sin p\cdot(e^{ikp}+e^{-ikp})}{-\sin p}
    \notag\\[1ex]
  &= 4k + i\cot p\cdot\bigl(e^{ikp}-e^{-ikp}\bigr) - k\bigl(e^{ikp}+e^{-ikp}\bigr).
    \label{lkpp}
\end{align}
The relation $\lk(0,0)=0$ follows from \eqref{lksums},
and the value for $\lk(p,q)$ when $|p|=|q|=\pi$ follows from \eqref{lkpp}
by taking the limit $p\to\pi$.

That $\lk$ is even in $p$ and $q$ is clear.
Since $L_k$ is a non-negative operator in $\ell^2(V)$ by \cite[Section~2]{PartI},
the function $\lk$ is non-negative.
The upper bound for $\lk$ in \eqref{boundslk} follows from \eqref{lksums}.

Finally, to show \eqref{l1pos} rewrite $\lkarb{1}$; for $|p|\ne|q|$ we have
\begin{equation}\label{l1simpl}
  \lkarb{1}(p,q) = 4 + 2\frac{\sin^2p-\sin^2q}{\cos p-\cos q}
  = 4 - 2(\cos p + \cos q),
\end{equation}
which extends to all $p,q\in[-\pi,\pi]$ by continuity.
The right-hand side of \eqref{l1simpl} is strictly positive
unless $p=q=0$.
\end{proof}

Let us now consider the Mellin transformation of the $k$-path Laplacians $L_k$, i.e.\
the operator
\[
  \LMs{s} = \sum_{k=1}^\infty \frac{1}{k^s}L_k;
\]
see \eqref{deftLM}.
Since $\|L_k\|\le 8k$ by Lemma~\ref{le:lk}, the series converges in the
operator norm when $s>2$.
As the next lemma shows, the operator $\LMs{s}$ is also unitarily equivalent
to a multiplication operator in $L^2([-\pi,\pi]^2)$.
In order to formulate this lemma, we have to recall the definition
of the polylogarithm.  For $s\in\CC$ the function $\Li_s$ is defined by
\[
  \Li_{s}(z) \defeq \sum_{k=1}^\infty \frac{z^k}{k^s}\,,
  \qquad  |z|<1,
\]
and by analytic continuation to $\CC\setminus[1,\infty)$ with $1$ being a branch point;
see, e.g.\ \cite[25.12.10]{nist}.

\begin{lemma}\label{le:lMs}
For $s>2$ we have
\begin{equation}\label{FLMsF}
  \bigl(\cF\LMs{s}\cF^{-1}f\bigr)(p,q) = \lMs{s}(p,q)f(p,q),
  \quad p,q\in[-\pi,\pi],\; f\in L^2\bigl([-\pi,\pi]^2\bigr),
\end{equation}
where
\begin{align}
  & \lMs{s}(p,q) \defeq \sum_{k=1}^\infty \frac{1}{k^s}\lk(p,q)
    \label{serieslMs}\\[1ex]
  &= \begin{cases}
    4\zeta(s-1) + \dfrac{g_s(p)-g_s(q)}{\cos p-\cos q}\,, & |p|\ne|q|, \\[3ex]
    4\zeta(s-1) - 2\cot p\cdot\Im\bigl(\Li_s(e^{ip})\bigr)
    - 2\Re\bigl(\Li_{s-1}(e^{ip})\bigr), & |p|=|q|\ne0,\pi, \\[1ex]
    0, & p=q=0, \\[1ex]
    4\bigl(1-(-1)^k\bigr)\zeta(s-1), & |p|=|q|=\pi,
  \end{cases}
  \notag
\end{align}
with
\begin{equation}\label{defgs}
  g_s(p) \defeq 2\sin p\cdot\Im\bigl(\Li_s(e^{ip})\bigr).
\end{equation}
The function $\lMs{s}$ is continuous and even in both $p$ and $q$,
and the following inequalities hold:
\begin{alignat}{2}
  & 0 \le \lMs{s}(p,q) \le 8\zeta(s-1), \qquad & & p,q\in[-\pi,\pi],
    \label{boundslMs}\\[1ex]
  & \lMs{s}(p,q) > 0, \qquad & & (p,q)\in[-\pi,\pi]^2\setminus\{(0,0)\}.
    \label{lMspos}
\end{alignat}
\end{lemma}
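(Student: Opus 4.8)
The plan is to deduce the entire lemma from Lemma~\ref{le:lk} together with the operator-norm convergence of the defining series, treating the closed-form evaluation as the only genuinely computational step. First I would establish \eqref{FLMsF} and the series representation \eqref{serieslMs} simultaneously. Since $\|L_k\|\le 8k$ by \eqref{boundslk}, the series $\sum_{k=1}^\infty k^{-s}L_k$ converges to $\LMs{s}$ in operator norm for $s>2$, and conjugation $A\mapsto\cF A\cF^{-1}$ is an isometric isomorphism of bounded operators, hence norm-continuous. Applying it term by term and invoking \eqref{FLkF} shows that $\cF\LMs{s}\cF^{-1}$ is multiplication by the sup-norm limit of the partial sums, namely $\lMs{s}(p,q)=\sum_{k=1}^\infty k^{-s}\lk(p,q)$, which is \eqref{serieslMs}.

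Next I would settle continuity, evenness and the bounds, all of which are inherited term by term. From $0\le\lk(p,q)\le 8k$ one gets $0\le k^{-s}\lk(p,q)\le 8k^{1-s}$, and since $\sum_{k=1}^\infty 8k^{1-s}=8\zeta(s-1)<\infty$, the Weierstrass $M$-test gives uniform convergence on $[-\pi,\pi]^2$; as each $\lk$ is continuous, the uniform limit $\lMs{s}$ is continuous, and evenness in $p$ and $q$ passes to the sum. Summing the same inequalities yields \eqref{boundslMs}. For the strict positivity \eqref{lMspos} I would isolate the $k=1$ term: every summand is non-negative, while $\lkarb{1}(p,q)>0$ for $(p,q)\ne(0,0)$ by \eqref{l1pos}, so $\lMs{s}(p,q)\ge\lkarb{1}(p,q)>0$ there, whereas $\lMs{s}(0,0)=0$.

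The remaining task is the closed form. For $|p|\ne|q|$ I would insert the first branch of $\lk$ and sum using the polylogarithm: the term $4k$ contributes $4\sum_k k^{1-s}=4\zeta(s-1)$, while $\sum_{k=1}^\infty k^{-s}e^{ikp}=\Li_s(e^{ip})$ and its conjugate give $\sum_k k^{-s}(e^{ikp}-e^{-ikp})=2i\Im\bigl(\Li_s(e^{ip})\bigr)$. This converts the $\sin p$ piece into $g_s(p)/(\cos p-\cos q)$ and likewise for $q$, producing $4\zeta(s-1)+\frac{g_s(p)-g_s(q)}{\cos p-\cos q}$. For $|p|=|q|\ne0,\pi$ I would sum the diagonal formula \eqref{lkpp}, now additionally needing $\sum_{k=1}^\infty k^{1-s}(e^{ikp}+e^{-ikp})=2\Re\bigl(\Li_{s-1}(e^{ip})\bigr)$, to obtain $4\zeta(s-1)-2\cot p\cdot\Im\bigl(\Li_s(e^{ip})\bigr)-2\Re\bigl(\Li_{s-1}(e^{ip})\bigr)$. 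The two cases $p=q=0$ and $|p|=|q|=\pi$ then follow by taking limits, using the continuity of $\lMs{s}$ already proved.

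The main obstacle I expect is the diagonal case $|p|=|q|$. Here the summand involves the factor $k$ multiplying $e^{ikp}+e^{-ikp}$, so the relevant series is $\Li_{s-1}$ on the unit circle, whose absolute convergence requires exactly $\Re(s-1)>1$, i.e.\ $s>2$; this is where the hypothesis is sharp, and it must be checked carefully rather than taken for granted. Using the established continuity of $\lMs{s}$ to reach the closed form along the diagonal as a limit of the off-diagonal expression, rather than summing the delicate diagonal series directly, is the cleanest way to avoid interchanging the summation with the Generalized Mean Value Theorem argument of Lemma~\ref{le:lk}.
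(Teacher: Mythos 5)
Your proposal is correct and follows essentially the same route as the paper's proof: termwise summation of the formulas from Lemma~\ref{le:lk} into $\zeta(s-1)$ and polylogarithms (off the diagonal, and on the diagonal via the closed form \eqref{lkpp}), continuity and the bounds \eqref{boundslMs} from uniform convergence of the non-negative series with majorant $8k^{1-s}$, and positivity \eqref{lMspos} by isolating the $k=1$ term via \eqref{l1pos}. The only caveat you raise --- a possible interchange problem with the Generalized Mean Value Theorem in the diagonal case --- is in fact a non-issue, since Lemma~\ref{le:lk} already establishes the pointwise value $\lk(p,p)$ in closed form, so the paper simply sums that numerical series directly (absolutely convergent for $s>2$, as you note), exactly as in your primary plan.
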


\begin{proof}
It follows from Lemma~\ref{le:lk} that \eqref{FLMsF} holds with $\lMs{s}$
defined as in \eqref{serieslMs}.
When $|p|\ne|q|$, we have
\begin{align*}
  \lMs{s}(p,q) &= \sum_{k=1}^\infty \frac{1}{k^s}\biggl[4k -
  i\dfrac{\sin p\cdot\bigl(e^{ikp}-e^{-ikp}\bigr) - \sin q\cdot\bigl(e^{ikq}-e^{-ikq}\bigr)}{\cos p-\cos q}\biggr]
    \\[1ex]
  &= 4\sum_{k=1}^\infty \frac{1}{k^{s-1}} - \frac{i}{\cos p-\cos q}
  \biggl[\sin p\cdot\sum_{k=1}^\infty \frac{1}{k^s}\Bigl((e^{ip})^k-(e^{-ip})^k\Bigr)
    \\[1ex]
  &\quad - \sin q\cdot\sum_{k=1}^\infty \frac{1}{k^s}\Bigl((e^{iq})^k-(e^{-iq})^k\Bigr)\biggr]
    \displaybreak[0]\\[1ex]
  &= 4\zeta(s-1) - \frac{i}{\cos p-\cos q}\Bigl[\sin p\cdot\Bigl(\Li_s(e^{ip})-\Li_s(e^{-ip})\Bigr)
    \\[1ex]
  &\quad - \sin q\cdot\Bigl(\Li_s(e^{iq})-\Li_s(e^{-iq})\Bigr)\Bigr],
\end{align*}
which proves the formula for $\lMs{s}$ in the first case.
Now assume that $|p|=|q|\ne0,\pi$.  Then
\begin{align*}
  & \lMs{s}(p,q) = \sum_{k=1}^\infty \frac{1}{k^s}
  \biggl[4k + i\cot p\cdot \bigl(e^{ikp}-e^{-ikp}\bigr) - k\bigl(e^{ikp}+e^{-ikp}\bigr)\biggr]
    \\[1ex]
  &= 4\sum_{k=1}^\infty \frac{1}{k^{s-1}}
  + i\cot p\cdot \sum_{k=1}^\infty \frac{1}{k^s}\Bigl((e^{ip})^k-(e^{-ip})^k\Bigr)
  - \sum_{k=1}^\infty \frac{1}{k^{s-1}}\Bigl((e^{ip})^k+(e^{-ip})^k\Bigr)
    \\[1ex]
  &= 4\zeta(s-1) + i\cot p\cdot \bigl(\Li_s(e^{ip})-\Li_s(e^{-ip})\bigr)
  - \Li_{s-1}(e^{ip}) - \Li_{s-1}(e^{-ip}).
\end{align*}
The remaining cases are clear.

The continuity of $\lMs{s}$ follows from the continuity of $\lk$ and
the fact that the series in \eqref{serieslMs} converges uniformly.
The symmetry of $\lMs{s}$ and the inequalities in \eqref{boundslMs}
follows directly from the symmetry of $\lk$ and \eqref{boundslk}.
The inequality in \eqref{lMspos} follows from \eqref{l1pos}
and the first inequality in \eqref{boundslk}.
\end{proof}

Since $\LMs{s}$ is a bounded operator, the Cauchy problem
\begin{align}
  u'(t) &= -\LMs{s}u(t), \qquad t>0,
    \label{cauchyLMs1}\\[0.5ex]
  u(0) &= \mr{u}
    \label{cauchyLMs2}
\end{align}
has a unique solution, which is given by
\[
  u(t) = e^{-t\LMs{s}}\mr{u}, \qquad t\ge0.
\]
It follows from Lemma~\ref{le:lMs} that
\begin{equation}
\begin{aligned}
  \bigl(\cF e^{-t\LMs{s}}\cF^{-1}f\bigr)(p,q) = e^{-t\lMs{s}(p,q)}f(p,q),
  \hspace*{25ex} & \\[0.5ex]
  \quad t\ge0,\; p,q\in[-\pi,\pi],\; f\in L^2\bigl([-\pi,\pi]^2\bigr). &
\end{aligned}
\end{equation}
Using this relation and the fact that $\lMs{s}$ is even one can easily show
the following theorem; cf.\ \cite[Theorem~5.2]{PartI} for the case of
the infinite path graph.
For the formulation of the theorem let $\bfe_{m,n}\in\ell^2(V)$ be the vector defined by
\begin{equation}\label{defemn}
  (\bfe_{m,n})_{x,y} = \begin{cases}
    1, & m=x,\,n=y, \\[0.5ex]
    0, & \text{otherwise}.
  \end{cases}
\end{equation}

\begin{theorem}\label{th:sol_evol_equ}
Let $s>2$ and $\mr{u}\in\ell^2(V)$.
The unique solution of \eqref{cauchyLMs1}, \eqref{cauchyLMs2} is given by
\[
  u_{x,y}(t) = \frac{1}{4\pi^2} \sum_{m,n\in\ZZ} \mr{u}_{m,n} \int_{-\pi}^\pi \int_{-\pi}^\pi
  e^{i[(x-m)p+(y-n)q]}e^{-t\lMs{s}(p,q)}\rd p\:\rd q,
  \qquad x,y\in\ZZ.
\]
In particular, for $\mr{u}=\bfe_{0,0}$ we obtain
\begin{equation}\label{sole0}
  u_{x,y}(t) = \frac{1}{4\pi^2} \int_{-\pi}^\pi \int_{-\pi}^\pi
  e^{i(xp+yq)}e^{-t\lMs{s}(p,q)}\rd p\:\rd q,
  \qquad x,y\in\ZZ.
\end{equation}
\end{theorem}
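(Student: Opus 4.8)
The plan is to transport the problem to the Fourier side, where $\LMs{s}$ acts diagonally. Since $\LMs{s}$ is bounded for $s>2$, the Cauchy problem \eqref{cauchyLMs1}, \eqref{cauchyLMs2} has the unique solution $u(t)=e^{-t\LMs{s}}\mr{u}$, and by Lemma~\ref{le:lMs} together with the semigroup diagonalization relation displayed just before the theorem we may write $u(t)=\cF^{-1}\bigl(e^{-t\lMs{s}}\cdot\cF\mr{u}\bigr)$. First I would read off the $(x,y)$-component by the inverse Fourier formula, obtaining
\[
  u_{x,y}(t) = \frac{1}{2\pi}\int_{-\pi}^\pi\int_{-\pi}^\pi
  e^{-t\lMs{s}(p,q)}(\cF\mr{u})(p,q)\,e^{-ipx}e^{-iqy}\rd p\,\rd q .
\]

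Next I would substitute the series $(\cF\mr{u})(p,q)=\frac{1}{2\pi}\sum_{m,n\in\ZZ}\mr{u}_{m,n}e^{ipm}e^{iqn}$ and interchange summation and integration to reach
\[
  u_{x,y}(t) = \frac{1}{4\pi^2}\sum_{m,n\in\ZZ}\mr{u}_{m,n}
  \int_{-\pi}^\pi\int_{-\pi}^\pi e^{-t\lMs{s}(p,q)}\,e^{-i[(x-m)p+(y-n)q]}\rd p\,\rd q .
\]
Now the evenness of $\lMs{s}$ in both variables, established in Lemma~\ref{le:lMs}, lets me apply the change of variables $(p,q)\mapsto(-p,-q)$: this leaves $\lMs{s}$ and the symmetric domain $[-\pi,\pi]^2$ unchanged while turning the exponent $-i[(x-m)p+(y-n)q]$ into $i[(x-m)p+(y-n)q]$, which yields the asserted general formula. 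Specializing to $\mr{u}=\bfe_{0,0}$, only the term $m=n=0$ survives and $\mr{u}_{0,0}=1$, so \eqref{sole0} follows immediately.

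The one point requiring care is the interchange of the sum over $m,n$ with the double integral, since the series for $\cF\mr{u}$ converges only in $L^2\bigl([-\pi,\pi]^2\bigr)$ and $\mr{u}\in\ell^2(V)$ need not be absolutely summable. I would justify it by observing that the weight $e^{-t\lMs{s}(p,q)}e^{-ipx}e^{-iqy}$ is bounded, indeed its modulus is at most $1$ because $0\le\lMs{s}\le 8\zeta(s-1)$ by \eqref{boundslMs}, and hence lies in $L^2$ of the finite-measure square. The integral in the first display is therefore a continuous linear functional of $\cF\mr{u}\in L^2$, so it may be evaluated on the $L^2$-convergent partial sums $\frac{1}{2\pi}\sum_{|m|,|n|\le N}\mr{u}_{m,n}e^{ipm}e^{iqn}$, for each of which the exchange with the finite sum is trivial; letting $N\to\infty$ completes the argument. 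I expect this $L^2$-limit step to be the main, though routine, obstacle, everything else being direct computation.
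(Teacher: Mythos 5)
Your proof is correct and follows exactly the route the paper intends: it applies the diagonalization relation $\cF e^{-t\LMs{s}}\cF^{-1}=e^{-t\lMs{s}}\cdot$ displayed just before the theorem, substitutes the Fourier series of $\mr{u}$, and uses the evenness of $\lMs{s}$ from Lemma~\ref{le:lMs} to flip the sign in the exponent --- precisely the two ingredients the paper cites when it says the theorem ``can easily'' be shown. Your $L^2$-continuity justification of the sum--integral interchange (which one could also phrase via Cauchy--Schwarz, since the integrals are the $\ell^2$ Fourier coefficients of the bounded function $e^{i(xp+yq)}e^{-t\lMs{s}(p,q)}$) supplies a detail the paper leaves implicit, so nothing is missing.
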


\section{Diffusion and superdiffusion for Mellin-transformed $k$-path Laplacian
on a square lattice}
\label{sec:superdiffusion}

\noindent
In this section we examine the long-time behaviour of the solution to
the Cauchy problem generated by the Mellin-transformed $k$-path Laplacian.
The main result is contained in Theorem~\ref{th:super_diffusion}.
To prove this theorem we first examine the asymptotic behaviour of the
function $\lMs{s}$ (see Figure~\ref{lMs_fig}) as the arguments tend to zero, which is
contained in Proposition~\ref{prop:asymp_lMs}.
The discussion is based on similar considerations undertaken for the
path graph in \cite{PartI}, but the arguments are more subtle.
We start with a simple lemma, which is used a couple of times below.

\begin{figure}[ht]
\centering
\begin{tabular}{cc}
\hspace*{-20mm}\includegraphics[width=100mm]{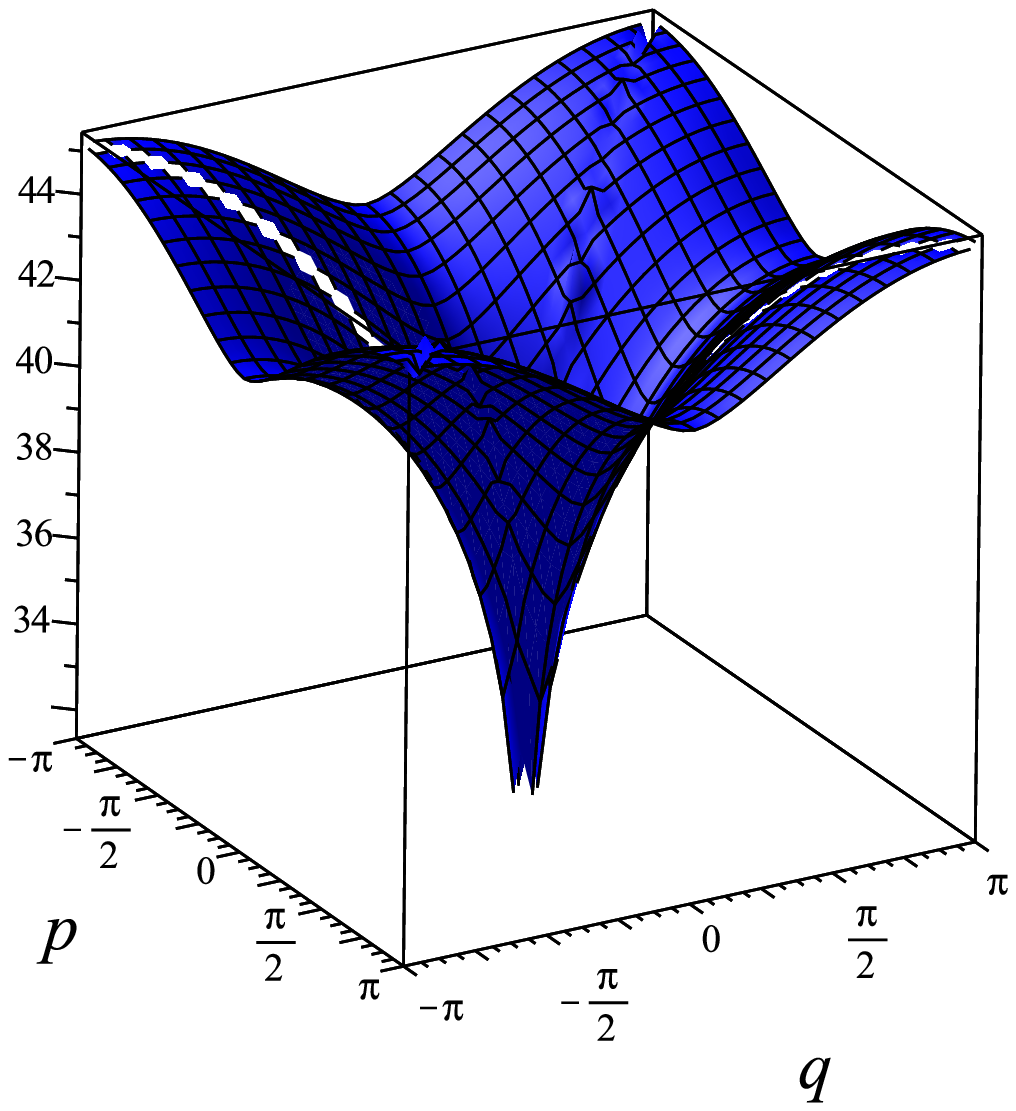}\hspace*{-20mm}
&
\hspace*{-10mm}\includegraphics[width=80mm]{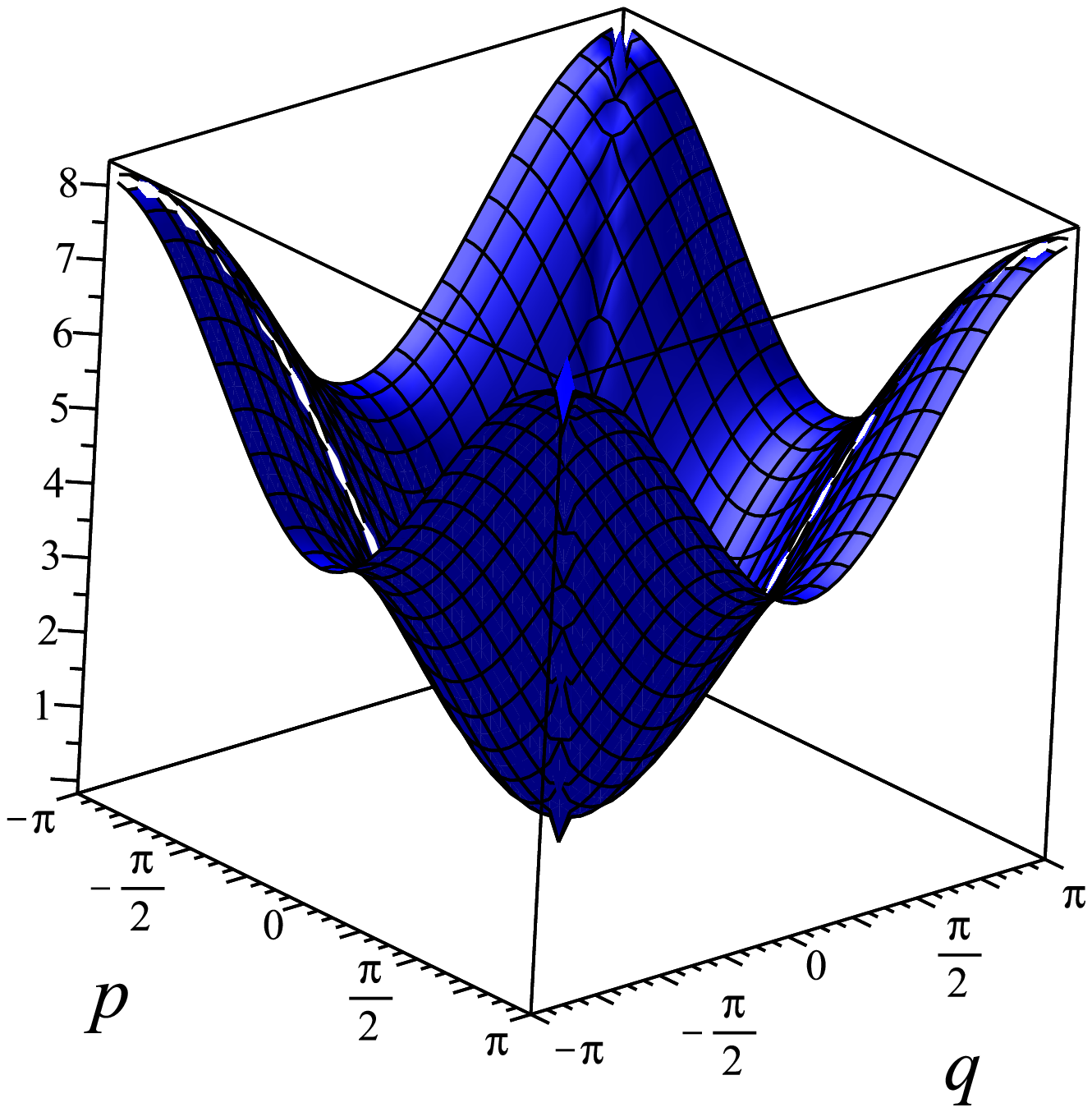}\hspace*{-10mm}
\\
(a) $s=2.1$ & (b) $s=5$
\end{tabular}
\\[1ex]
\includegraphics[width=80mm]{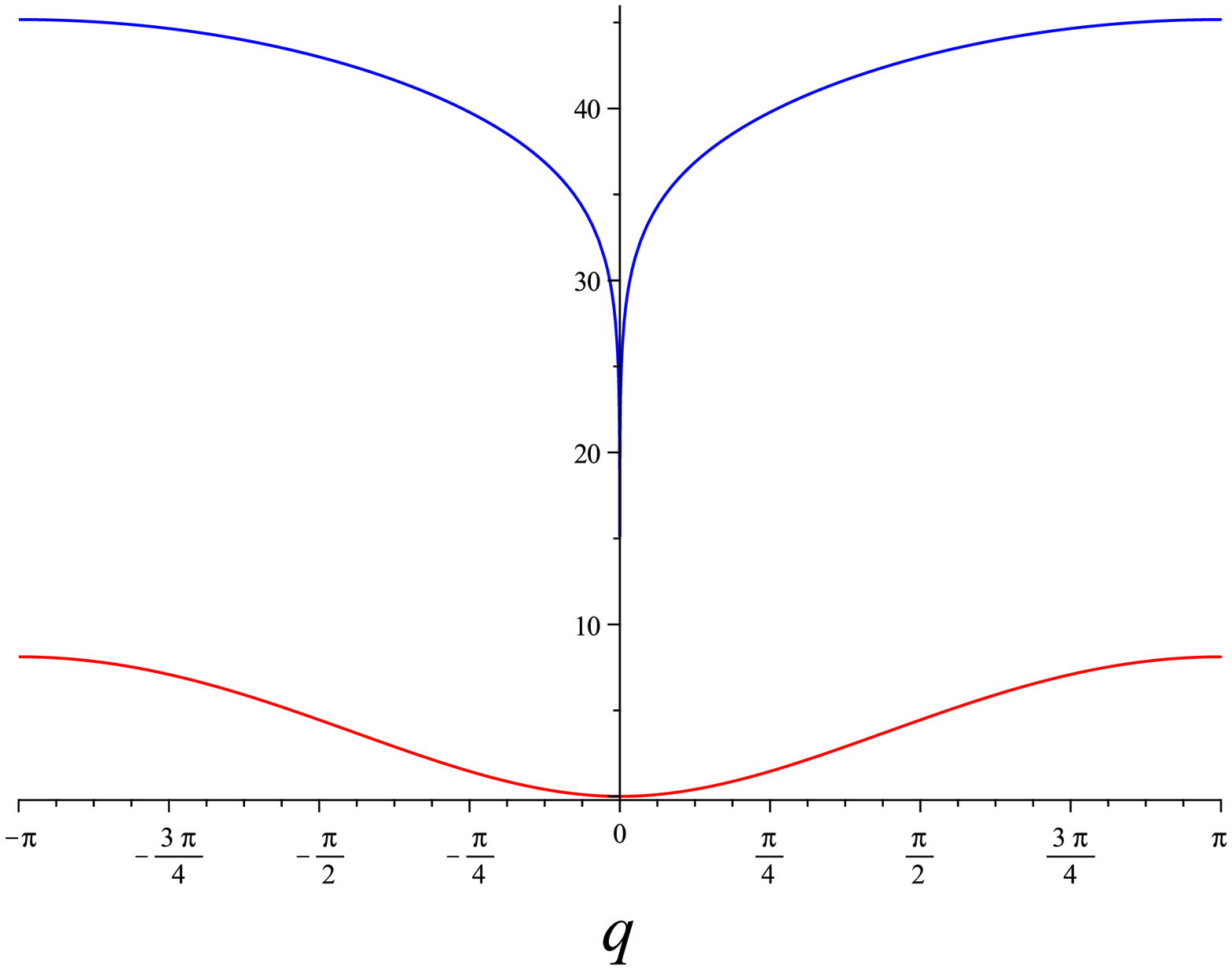}
\\
(c) $\lMs{s}$ restricted to $p=q$ for $s=2.1$ (blue) and $s=5$ (red), respectively
\caption{The graph of the function $\lMs{s}$ on the square $[-\pi,\pi]^{2}$
for the parameters $s=2.1$ (a) and $s=5$ (b), respectively.
The third graph (c) shows the behaviour of both solutions $s=2.1$ and $s=5$
restricted to the line $p=q$.}
\label{lMs_fig}
\end{figure}

\begin{lemma}\label{le:diff_quot}
Let $b>0$, let $f:(0,b)\to\RR$ be differentiable and assume that
\[
  |f'(t)| \le Ct^\alpha, \qquad t\in(0,b),
\]
for some $C>0$ and $\alpha\ge1$.  Then
\[
  \bigg|\frac{f(p)-f(q)}{p^2-q^2}\bigg|
  \le \frac{C}{2}\max\bigl\{p^{\alpha-1},q^{\alpha-1}\bigr\},
  \qquad p,q\in(0,b),\, p\ne q.
\]
\end{lemma}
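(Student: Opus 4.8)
The plan is to factor the quotient as
\[
  \frac{f(p)-f(q)}{p^2-q^2}
  = \frac{1}{p+q}\cdot\frac{f(p)-f(q)}{p-q},
\]
and to estimate the two factors separately. The second factor is a difference quotient of $f$, so by the ordinary Mean Value Theorem there is a point $\xi$ strictly between $p$ and $q$ with
\[
  \frac{f(p)-f(q)}{p-q} = f'(\xi).
\]
Using the hypothesis $|f'(\xi)|\le C\xi^\alpha$ and the fact that $\xi$ lies between $p$ and $q$, I would bound $\xi^\alpha \le \max\{p,q\}^\alpha$, giving $|f'(\xi)|\le C\max\{p,q\}^\alpha$. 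For the first factor, since $p,q>0$ we simply have $p+q\ge\max\{p,q\}$, hence $\frac{1}{p+q}\le\frac{1}{\max\{p,q\}}$.

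Combining the two estimates yields
\[
  \bigg|\frac{f(p)-f(q)}{p^2-q^2}\bigg|
  \le \frac{1}{\max\{p,q\}}\cdot C\max\{p,q\}^\alpha
  = C\max\{p,q\}^{\alpha-1}.
\]
This is within a factor of $2$ of the claimed bound, but the stated inequality carries a constant $\frac{C}{2}$ and replaces $\max\{p,q\}^{\alpha-1}$ by $\max\{p^{\alpha-1},q^{\alpha-1}\}$; note that for $\alpha\ge1$ one has $\max\{p,q\}^{\alpha-1}=\max\{p^{\alpha-1},q^{\alpha-1}\}$ since $t\mapsto t^{\alpha-1}$ is nondecreasing on $(0,\infty)$, so the two expressions agree. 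To recover the sharper constant $\frac{C}{2}$, I would be more careful with the first factor: rather than discarding it crudely, I would keep both $p$ and $q$ and observe that, writing $M=\max\{p,q\}$, we have $p+q\ge M$ but in fact a cleaner route is available.

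The cleaner route, which I expect to give the constant $\frac12$ directly, is to avoid the MVT and instead write the quotient as an integral average. Since $p^2-q^2=(p-q)(p+q)$ and $f(p)-f(q)=\int_q^p f'(t)\,\rd t$, a substitution $t^2=\tau$ gives
\[
  \frac{f(p)-f(q)}{p^2-q^2}
  = \frac{1}{p^2-q^2}\int_q^p f'(t)\,\rd t,
\]
and bounding $|f'(t)|\le Ct^\alpha$ under the integral produces
\[
  \bigg|\int_q^p f'(t)\,\rd t\bigg|
  \le C\bigg|\int_q^p t^\alpha\,\rd t\bigg|
  = \frac{C}{\alpha+1}\bigl|p^{\alpha+1}-q^{\alpha+1}\bigr|.
\]
Dividing by $|p^2-q^2|$ and using the elementary inequality $\bigl|p^{\alpha+1}-q^{\alpha+1}\bigr|\le \frac{\alpha+1}{2}\max\{p^{\alpha-1},q^{\alpha-1}\}\,|p^2-q^2|$ for $\alpha\ge1$ then yields exactly the claimed bound with constant $\frac{C}{2}$. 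The main obstacle is this last elementary inequality comparing $p^{\alpha+1}-q^{\alpha+1}$ with $p^2-q^2$; I would prove it by factoring out $p^2-q^2$ and controlling the remaining symmetric polynomial factor, or—more simply and robustly—by reverting to the MVT factorisation $\frac{1}{p+q}f'(\xi)$ and noting that since $\alpha\ge1$, $t\mapsto t^{\alpha-1}$ is monotone so $\xi^{\alpha-1}\le\max\{p^{\alpha-1},q^{\alpha-1}\}$, while $\frac{\xi^\alpha}{p+q}=\frac{\xi^{\alpha-1}\xi}{p+q}\le\frac{\xi^{\alpha-1}}{2}$ because $\xi<\max\{p,q\}\le p+q$ and, more to the point, $\xi\le\frac{p+q}{1}$ with the factor $\frac12$ coming from $\xi\le\frac{p+q}{2}$ being \emph{false} in general—so the honest version is the MVT computation giving
\[
  \bigg|\frac{f(p)-f(q)}{p^2-q^2}\bigg|
  = \frac{|f'(\xi)|}{p+q}
  \le \frac{C\xi^\alpha}{p+q}
  \le \frac{C\,\max\{p,q\}^{\alpha-1}\,\xi}{p+q}
  \le \frac{C}{2}\max\{p^{\alpha-1},q^{\alpha-1}\},
\]
where the final step uses $\xi\le\frac{p+q}{2}$ when $\xi$ is the true midpoint-type value, which is the delicate point to pin down rigorously. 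I would therefore present the integral-average argument as the main line, since there the constant $\frac{1}{\alpha+1}$ emerges transparently and the reduction to the single comparison inequality is the only nontrivial step.
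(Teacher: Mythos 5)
Your main line (the integral-average argument) is correct and does yield the constant $\frac{C}{2}$, but it is a genuinely different route from the paper's. The paper substitutes $x=p^2$, $y=q^2$ and applies the Mean Value Theorem to $g(x)\defeq f(\sqrt{x})$ on $(0,b^2)$: then $\frac{f(p)-f(q)}{p^2-q^2}=g'(\xi)=\frac{f'(\sqrt{\xi})}{2\sqrt{\xi}}$ for some $\xi$ between $p^2$ and $q^2$, so the factor $\frac12$ appears automatically from differentiating $\sqrt{x}$, and $|g'(\xi)|\le\frac{C}{2}(\sqrt{\xi}\,)^{\alpha-1}\le\frac{C}{2}\max\bigl\{p^{\alpha-1},q^{\alpha-1}\bigr\}$ since $\sqrt{\xi}\le\max\{p,q\}$ and $t\mapsto t^{\alpha-1}$ is nondecreasing for $\alpha\ge1$. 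You correctly diagnosed that the naive MVT applied to $f$ itself only gives the constant $C$, and you were right to flag and discard the attempted rescue via $\xi\le\frac{p+q}{2}$, which is false in general --- that dead end is precisely what the paper's change of variables avoids. Two points would tighten your integral route. First, your remaining ``elementary inequality'' should not be attacked by factoring a symmetric polynomial, which only makes sense for integer $\alpha$; instead bound the integrand directly: for $0<q<p$ and $t\in[q,p]$ one has $t^\alpha=t^{\alpha-1}\cdot t\le\max\{p,q\}^{\alpha-1}t$, whence
\[
  |f(p)-f(q)| \le C\int_q^p t^{\alpha-1}\,t\,\rd t
  \le C\max\{p,q\}^{\alpha-1}\,\frac{p^2-q^2}{2},
\]
which gives the claim at once and makes the detour through $\bigl|p^{\alpha+1}-q^{\alpha+1}\bigr|$ unnecessary. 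Second, the hypothesis is only that $f$ is differentiable, so writing $f(p)-f(q)=\int_q^p f'(t)\,\rd t$ requires a word of justification: since $|f'|\le Cb^\alpha$ on $(0,b)$, the function $f$ is Lipschitz, hence absolutely continuous, and the fundamental theorem of calculus applies with the Lebesgue integral; the paper's MVT argument sidesteps this issue entirely. In exchange, your version is equally sharp, makes the origin of the constant transparent as an average of $2t$ against $t^{\alpha-1}$, and adapts readily to other denominators such as $p^\beta-q^\beta$.
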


\begin{proof}
Define the function $g(x)\defeq f(\sqrt{x})$, $x\in(0,b^2)$.
Let $p,q\in(0,b)$ such that $p\ne q$ and set $x\defeq p^2$, $y\defeq q^2$.  Then
\begin{equation}\label{diff_quotient}
  \bigg|\frac{f(p)-f(q)}{p^2-q^2}\bigg|
  = \bigg|\frac{g(x)-g(y)}{x-y}\bigg| \\[1ex]
  = |g'(\xi)|
\end{equation}
for some $\xi$ between $x$ and $y$ by the Mean Value Theorem.
Since $\sqrt{\xi}\le\max\{p,q\}$, we obtain that
\[
  |g'(\xi)| = \bigg|\frac{f'(\sqrt{\xi})}{2\sqrt{\xi}}\bigg|
  \le \frac{C(\sqrt{\xi})^\alpha}{2\sqrt{\xi}}
  \le \frac{C}{2}\max\bigl\{p^{\alpha-1},q^{\alpha-1}\bigr\},
\]
\vspace*{-2ex}
which, together with \eqref{diff_quotient}, finishes the proof.
\end{proof}

In the next three lemmas we prove auxiliary asymptotic results,
which are used to obtain the asymptotic behaviour of $\lMs{s}$
in Proposition~\ref{prop:asymp_lMs}.

\begin{lemma}\label{le:asymp1}
We have
\[
  \frac{1}{\cos p-\cos q} = -\frac{2}{p^2-q^2}\biggl[1+\frac{1}{12}\bigl(p^2+q^2\bigr)
  + R_1(p,q)\biggr], \quad
  p,q\in[-\pi,\pi],\; |p|\ne|q|.
\]
where
\[
  R_1(p,q) = \rmO\bigl(p^4+q^4\bigr), \qquad p,q\to0,\; |p|\ne|q|.
\]
\end{lemma}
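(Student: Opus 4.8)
The plan is to expand $\cos p - \cos q$ using the Taylor series of cosine and then factor out the expected leading behaviour $-\tfrac12(p^2-q^2)$.  First I would write
\[
  \cos p - \cos q = -\frac{p^2-q^2}{2} + \frac{p^4-q^4}{24} - \frac{p^6-q^6}{720} + \cdots,
\]
using $\cos t = 1 - \tfrac{t^2}{2} + \tfrac{t^4}{24} - \cdots$ so that the constant terms cancel.  The key observation is that every term in this difference carries a factor $p^{2m}-q^{2m}$, which is divisible by $p^2-q^2$; indeed $p^{2m}-q^{2m}=(p^2-q^2)\sum_{j=0}^{m-1}p^{2j}q^{2(m-1-j)}$.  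This is what makes the quotient $\frac{1}{\cos p-\cos q}$ well behaved even as $|p|\ne|q|$ approach each other, and it lets me pull the singular factor $\frac{1}{p^2-q^2}$ out cleanly.

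Next I would factor to obtain
\[
  \cos p - \cos q = -\frac{p^2-q^2}{2}\biggl[1 - \frac{p^2+q^2}{12} + \rmO\bigl(p^4+q^4\bigr)\biggr],
\]
where the bracket comes from dividing the series by $-\tfrac12(p^2-q^2)$; here the $p^2+q^2$ term arises precisely because $\frac{p^4-q^4}{24}=\frac{(p^2-q^2)(p^2+q^2)}{24}$, and dividing by $-\tfrac12(p^2-q^2)$ turns the coefficient $\tfrac{1}{24}$ into $-\tfrac{1}{12}$.  Taking reciprocals then gives
\[
  \frac{1}{\cos p - \cos q}
  = -\frac{2}{p^2-q^2}\cdot\frac{1}{\,1 - \tfrac{1}{12}(p^2+q^2) + \rmO(p^4+q^4)\,}.
\]
Expanding the remaining reciprocal by the geometric-type formula $\frac{1}{1-w}=1+w+\rmO(w^2)$ with $w=\tfrac{1}{12}(p^2+q^2)+\rmO(p^4+q^4)$ produces $1+\tfrac{1}{12}(p^2+q^2)+\rmO(p^4+q^4)$, which is exactly the claimed expansion, and this identifies $R_1$ as the collected higher-order remainder.

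The main obstacle is bookkeeping the error term rigorously, since the naive series manipulations must be justified uniformly on a neighbourhood of the diagonal $|p|=|q|$ where $p^2-q^2$ vanishes.  To make the remainder estimate honest I would work with finite Taylor expansions with explicit remainders rather than formal power series: write each cosine difference as a polynomial plus a controlled remainder, divide the exact polynomial part by $p^2-q^2$ using the factorisation above, and bound the remainder contribution by $\rmO(p^4+q^4)$ uniformly.  Care is needed because the estimate must hold for all $p,q\in[-\pi,\pi]$ with $|p|\ne|q|$, not merely as $p,q\to0$; however, away from the origin the denominator $\cos p-\cos q$ is bounded away from the pathological behaviour, so the $\rmO$-statement (which is only asserted as $p,q\to0$) follows from the local analysis near the origin together with boundedness on the complement.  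I expect the final write-up to invoke Lemma~\ref{le:diff_quot}, or an argument in its spirit, to convert the difference-quotient form into the stated $\max\{p^{\alpha-1},q^{\alpha-1}\}$-type bound and thereby control $R_1$ cleanly.
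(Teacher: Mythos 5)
Your proposal is correct and follows essentially the same route as the paper: the paper likewise writes $\cos p = 1-\frac{p^2}{2}+\frac{p^4}{24}+f(p)$ with $f'(p)=\rmO(p^5)$, factors out $-\frac12(p^2-q^2)$ to obtain the bracket $1-\frac{1}{12}(p^2+q^2)-2\frac{f(p)-f(q)}{p^2-q^2}$, controls the difference quotient by Lemma~\ref{le:diff_quot} exactly as you anticipate, and then takes reciprocals and extends to all of $[-\pi,\pi]^2$ by continuity and symmetry. Your handling of the remainder via finite Taylor expansions with explicit remainders and the geometric expansion of the reciprocal is precisely the paper's argument, so no gap remains.
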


\begin{proof}
We write
\[
  \cos p = 1 - \frac{p^2}{2} + \frac{p^4}{24} + f(p)
\]
where $f'(p) = \rmO(p^5)$, $p\to0$.  For $p,q\in(0,\pi]$ with $p\ne q$ we have
\begin{align*}
  \cos p - \cos q &= -\frac{p^2-q^2}{2} + \frac{p^4-q^4}{24} + f(p) - f(q) \\[1ex]
  &= -\frac{1}{2}(p^2-q^2)\biggl[1-\frac{1}{12}\bigl(p^2+q^2\bigr)
  - 2\frac{f(p)-f(q)}{p^2-q^2}\biggr] \displaybreak[0]\\[1ex]
  &= -\frac{1}{2}(p^2-q^2)\biggl[1-\frac{1}{12}\bigl(p^2+q^2\bigr)
  + \rmO\bigl(p^4+q^4\bigr)\biggr], \qquad p,q\to0,
\end{align*}
where the last relation follows from Lemma~\ref{le:diff_quot}.
Now the claim is obtained by taking inverses on both sides
and extending the result to non-positive $p,q$ by continuity and symmetry.
\end{proof}

\begin{lemma}\label{le:asymp2}
Let $s\in (2,\infty)\setminus\{4\}$.  Then
\[
  2\Im\bigl(\Li_s(e^{ip})\bigr) = -\frac{C_s}{2}p^{s-1} + 2\zeta(s-1)p
  - \frac{\zeta(s-3)}{3}p^3 + R_{2,s}(p), \qquad p \in (0,2\pi),
\]
where
\begin{equation}\label{defCs}
  C_s \defeq \begin{cases}
    -\dfrac{2\pi}{\Gamma(s)\sin(\frac{s\pi}{2})}\,, & s\notin2\ZZ, \\[2ex]
    0, & s\in2\ZZ,
  \end{cases}
\end{equation}
and
\[
  R_{2,s}(p) = \rmO(p^5) \quad\text{and}\quad R'_{2,s}(p) = \rmO(p^4),
  \qquad p\searrow0, \qquad \text{if}\;\; s\ne6,
\]
and
\[
  R_{2,s}(p) = \rmO\bigl(p^5|\ln p|\bigr) \quad\text{and}\quad
  R'_{2,s}(p) = \rmO\bigl(p^4|\ln p|\bigr),
  \qquad p\searrow0, \qquad \text{if}\;\; s=6.
\]
\end{lemma}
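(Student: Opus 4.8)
The plan is to extract the expansion from the behaviour of the polylogarithm near $z=1$. Because the coefficients $k^{-s}$ are real, $\overline{\Li_s(e^{ip})}=\Li_s(e^{-ip})$ for real $p$, so
\[
  2\Im\bigl(\Li_s(e^{ip})\bigr) = \frac{1}{i}\bigl(\Li_s(e^{ip})-\Li_s(e^{-ip})\bigr).
\]
Into each term I would substitute Jonquière's expansion of the polylogarithm about the origin,
\[
  \Li_s(e^{\mu}) = \Gamma(1-s)(-\mu)^{s-1} + \sum_{n=0}^{\infty}\frac{\zeta(s-n)}{n!}\mu^n,
  \qquad 0<|\mu|<2\pi,\ s\notin\{1,2,3,\dots\},
\]
taken with $\mu=ip$ and $\mu=-ip$ and the principal branch of $(-\mu)^{s-1}$; the radius of convergence $2\pi$ is exactly why the identity is asserted on $p\in(0,2\pi)$.

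Subtracting the two expansions and dividing by $i$, the even powers cancel while the odd powers survive. Since $(ip)^{2m+1}-(-ip)^{2m+1}=2i(-1)^m p^{2m+1}$, the series part becomes the real series $2\sum_{m\ge0}\frac{(-1)^m\zeta(s-2m-1)}{(2m+1)!}p^{2m+1}$, whose first two terms are $2\zeta(s-1)p$ and $-\frac{\zeta(s-3)}{3}p^3$; note that these coefficients have poles precisely at $s=2$ and $s=4$, which explains why those values are excluded. The singular term contributes $2\Gamma(1-s)\cos\frac{\pi s}{2}\,p^{s-1}$, and using the reflection formula $\Gamma(s)\Gamma(1-s)=\pi/\sin(\pi s)$ together with $\sin(\pi s)=2\sin\frac{\pi s}{2}\cos\frac{\pi s}{2}$ I would rewrite its coefficient as $-\frac{C_s}{2}$, matching \eqref{defCs}. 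Collecting the three leading terms and defining $R_{2,s}$ as the tail $2\sum_{m\ge2}$ yields the stated formula for non-integer $s$. As this tail equals $p^5$ times a power series convergent for $|p|<2\pi$, the bounds $R_{2,s}(p)=\rmO(p^5)$ and, after term-by-term differentiation, $R'_{2,s}(p)=\rmO(p^4)$ follow immediately, as long as $\zeta(s-5)$ is finite, i.e.\ $s\ne6$.

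The main obstacle is the integer values of $s$, where Jonquière's formula breaks down because $\Gamma(1-s)$ has a pole. My preferred route is to fix $p\in(0,2\pi)$ and note that $2\Im(\Li_s(e^{ip}))$ is an entire function of $s$; since the right-hand side agrees with it for all non-integer $s$, the identity persists to integer $s$ by analytic continuation, and the apparent poles of the separate pieces must cancel. For odd integers the factor $\cos\frac{\pi s}{2}=0$ already removes the singular term and the poles of $\zeta(s-n)$ fall on even $n$, which do not enter $\Im$, so no logarithm appears (one may check directly, e.g.\ that for $s=3$ the expansion terminates). For even integers $s\ge6$ the singular term and the $n=s-1$ term of the series each develop a simple pole in $s$, and their residues cancel, leaving a contribution of order $p^{s-1}\ln p$: expanding near $s=6$ as $s=6+\eps$ one sees that $-\frac{C_s}{2}p^{s-1}$ and $\frac{\zeta(s-5)}{60}p^5$ each carry a $\frac{1}{\eps}p^5$ singularity which cancels, leaving a finite $p^5\ln p$ term whose derivative is $\rmO(p^4|\ln p|)$; this is the source of the $\rmO(p^5|\ln p|)$ remainder at $s=6$. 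For even $s\ge8$ the logarithm sits at order $p^{s-1}\ge p^7=\rmo(p^5)$ and is harmlessly absorbed, so the bounds $R_{2,s}(p)=\rmO(p^5)$, $R'_{2,s}(p)=\rmO(p^4)$ remain intact. The delicate point throughout is this pole-collision bookkeeping for even $s$.
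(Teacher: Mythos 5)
Your proposal is correct in substance, and for non-integer $s$ it is the same proof as the paper's: both start from Jonqui\`ere's expansion \cite[25.12.12]{nist} on $p\in(0,2\pi)$, extract the odd-power series $2\sum_{l\ge0}\zeta(s-2l-1)\frac{(-1)^l}{(2l+1)!}p^{2l+1}$, identify the singular coefficient $2\Gamma(1-s)\cos(\frac{s\pi}{2})=-\frac{C_s}{2}$ via the reflection formula, and define $R_{2,s}$ as the $l\ge2$ tail, a $p^5$ times a power series with radius $2\pi$ (the paper takes $2\Im$ of one expansion rather than forming $\frac{1}{i}(\Li_s(e^{ip})-\Li_s(e^{-ip}))$, an immaterial difference). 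Where you genuinely diverge is the even-integer case $s\in\{6,8,\dots\}$: the paper quotes Lindel\"of's closed-form expansion \cite[p.~131]{lindeloef}, $\Li_s(e^{ip})=\frac{(ip)^{s-1}}{(s-1)!}\bigl(H_{s-1}-\Log(-ip)\bigr)+\sum_{n\ne s-1}\zeta(s-n)\frac{(ip)^n}{n!}$, and reads off the logarithm directly, whereas you let $s$ tend to the even integer inside Jonqui\`ere's formula and cancel the colliding simple poles. Your bookkeeping at $s=6+\eps$ is right: $-\frac{C_s}{2}p^{s-1}\approx-\frac{1}{60\eps}p^5-\frac{1}{60}p^5\ln p$ against $\frac{\zeta(1+\eps)}{60}p^5\approx\frac{1}{60\eps}p^5$, leaving $-\frac{1}{60}p^5\ln p$, which agrees exactly with the paper's term $\frac{2(-1)^{s/2-1}}{(s-1)!}p^{s-1}(H_{s-1}-\ln p)$ at $s=6$; your route is self-contained from one formula, the paper's is shorter at the cost of a second citation. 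Two points need tightening. First, the continuation step rests on the fact that $s\mapsto\Li_s(e^{ip})$ is entire for fixed $p\in(0,2\pi)$ (this is the Lerch/periodic zeta function and is true, but it should be cited), together with local uniformity in $s$ to pass to the limit through the series. Second, your sentence about odd integers is wrong as literally written: the term $-\frac{C_s}{2}p^{s-1}$ does \emph{not} disappear at odd $s$ --- by your own reflection-formula computation $-\frac{C_s}{2}=\pi/\bigl(\Gamma(s)\sin(\frac{s\pi}{2})\bigr)$, which is finite and \emph{nonzero} at odd integers (e.g.\ the term $-\frac{\pi}{2}p^2$ at $s=3$), consistent with the stated definition of $C_s$, where only even $s$ gives $C_s=0$. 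What the zero of $\cos(\frac{s\pi}{2})$ actually does is cancel the pole of $\Gamma(1-s)$, making the coefficient continuous there; and no logarithm appears in $2\Im\bigl(\Li_s(e^{ip})\bigr)$ because the would-be logarithm multiplies $(ip)^{s-1}$, which is real for odd $s$ --- equivalently, the pole of $\zeta(s-n)$ sits at the even index $n=s-1$, whose termwise contribution to the imaginary part vanishes identically. With that sentence repaired and the entirety of the Lerch function cited, your argument is a complete and valid alternative to the paper's treatment of the even-integer case.
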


\begin{proof}
First let $s\in(2,\infty)\setminus\NN$.  It follows from \cite[25.12.12]{nist} that,
for $p\in(0,2\pi)$,
\begin{align*}
  & 2\Im\bigl(\Li_s(e^{ip})\bigr)
  = 2\Im\Biggl[\Gamma(1-s)(-ip)^{s-1} + \sum_{n=0}^\infty \zeta(s-n)\frac{(ip)^n}{n!}\Biggr] \\[1ex]
  &= 2\Im\Biggl[\Gamma(1-s)p^{s-1}e^{-(s-1)\frac{\pi}{2}i}
  + \sum_{n=0}^\infty \zeta(s-n)\frac{i^n p^n}{n!}\Biggr] \\[1ex]
  &= -2\Gamma(1-s)\sin\Bigl((s-1)\frac{\pi}{2}\Bigr)p^{s-1}
  + 2\sum_{l=0}^\infty \zeta(s-2l-1)\frac{(-1)^l}{(2l+1)!}p^{2l+1} \\[1ex]
  &= -\frac{C_s}{2}p^{s-1} + 2\zeta(s-1)p - \frac{\zeta(s-3)}{3}p^3 + R_{2,s}(p),
\end{align*}
where
\begin{align*}
  C_s &= 4\Gamma(1-s)\sin\Bigl((s-1)\frac{\pi}{2}\Bigr)
  = \frac{4\pi\sin\bigl((s-1)\frac{\pi}{2}\bigr)}{\Gamma(s)\sin(s\pi)}
  = -\frac{4\pi\cos(\frac{s\pi}{2})}{\Gamma(s)\sin(s\pi)} \\[1ex]
  &= -\frac{2\pi}{\Gamma(s)\sin(\frac{s\pi}{2})}
\end{align*}
and
\[
  R_{2,s}(p) = 2\sum_{l=2}^\infty \zeta(s-2l-1)\frac{(-1)^l}{(2l+1)!}p^{2l+1}.
\]
This relation extends to $s$ being an odd integer with $s\ge3$.
Moreover, $R_{2,s}$ satisfies
\[
  R_{2,s}(p) = \rmO(p^5) \quad\text{and}\quad R'_{2,s}(p) = \rmO(p^4), \qquad p\searrow0.
\]
This proves the claim for $s\in(2,\infty)\setminus2\NN$.

Now let $s\in\{6,8,\ldots\}$ and set
\[
  H_n = \sum_{j=1}^n \frac{1}{j}\,.
\]
From \cite[p.~131]{lindeloef} we obtain, again for $p\in(0,2\pi)$,
\begin{align*}
  & 2\Im\bigl(\Li_s(e^{ip})\bigr)
  = 2\Im\Biggl[\frac{(ip)^{s-1}}{(s-1)!}\Bigl(H_{s-1}-\Log(-ip)\Bigr)
  + \sum_{\substack{n=0 \\[0.2ex] n\ne s-1}}^\infty \zeta(s-n)\frac{(ip)^n}{n!}\Biggr] \\[1ex]
  &= 2\Im\Biggl[\frac{(-1)^{\frac{s}{2}-1}i p^{s-1}}{(s-1)!}\Bigl(H_{s-1} - \ln p + i\frac{\pi}{2}\Bigr)
  + \sum_{\substack{n=0 \\[0.2ex] n\ne s-1}}^\infty \zeta(s-n)\frac{i^n p^n}{n!}\Biggr] \\[1ex]
  &= \frac{2(-1)^{\frac{s}{2}-1}}{(s-1)!}p^{s-1}\bigl(H_{s-1}-\ln p\bigr)
  + 2\sum_{\substack{l=0 \\[0.2ex] l\ne\frac{s}{2}-1}}^\infty
  \zeta(s-2l-1)\frac{(-1)^l}{(2l+1)!}p^{2l+1} \\[1ex]
  &= 2\zeta(s-1)p - \frac{\zeta(s-3)}{3}p^3 + R_{2,s}(p),
\end{align*}
where
\[
  R_{2,s}(p) = \frac{2(-1)^{\frac{s}{2}-1}}{(s-1)!}p^{s-1}\bigl(H_{s-1}-\ln p\bigr)
  + 2\sum_{\substack{l=2 \\[0.2ex] l\ne\frac{s}{2}-1}}
  \zeta(s-2l-1)\frac{(-1)^l}{(2l+1)!}p^{2l+1},
\]
which satisfies
\[
  R_{2,s}(p) = \rmO(p^5) \quad\text{and}\quad R'_{2,s}(p) = \rmO(p^4),
  \qquad p\searrow0, \qquad \text{if}\;\; s\ge8,
\]
and
\[
  R_{2,s}(p) = \rmO\bigl(p^5|\ln p|\bigr) \quad\text{and}\quad
  R'_{2,s}(p) = \rmO\bigl(p^4|\ln p|\bigr),
  \qquad p\searrow0, \qquad \text{if}\;\; s=6.
\]
This finishes the proof in the case when $s\in\{6,8,\ldots\}$.
\end{proof}

\begin{lemma}\label{le:asymp3}
Let $s\in(2,\infty)\setminus\{4\}$ and let $g_s$ be defined as in \eqref{defgs}
and $C_s$ as in \eqref{defCs}.  Then
\[
  g_s(p) = - \frac{C_s}{2}p^s + 2\zeta(s-1)p^2 - \frac{\zeta(s-1)+\zeta(s-3)}{3}p^4 + R_{3,s}(p),
\]
where $R_{3,s}$ satisfies
\begin{equation}\label{asympR2}
  R'_{3,s}(p) = \begin{cases}
    \rmO(p^{s+1}), & s\in(2,4), \\[1ex]
    \rmO(p^5), & s\in(4,\infty)\setminus\{6\}, \\[1ex]
    \rmO\bigl(p^5|\ln p|\bigr), & s=6,
  \end{cases}
\end{equation}
as $p\searrow0$.
\end{lemma}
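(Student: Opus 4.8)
The plan is to derive the expansion directly from the definition $g_s(p)=2\sin p\cdot\Im(\Li_s(e^{ip}))$ by inserting the expansion of $2\Im(\Li_s(e^{ip}))$ from Lemma~\ref{le:asymp2} and multiplying by the Taylor expansion of $\sin p$. Set $A(p)\defeq 2\Im(\Li_s(e^{ip}))$, so that $g_s(p)=\sin p\cdot A(p)$ and, by Lemma~\ref{le:asymp2},
\[
  A(p) = -\frac{C_s}{2}p^{s-1} + 2\zeta(s-1)p - \frac{\zeta(s-3)}{3}p^3 + R_{2,s}(p),
\]
together with the stated bounds on $R_{2,s}$ and $R'_{2,s}$. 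Writing $\sin p = p - \frac{p^3}{6} + S(p)$ with $S(p)=\rmO(p^5)$ and $S'(p)=\rmO(p^4)$, I would multiply out $\sin p\cdot A(p)$ and collect the lowest-order contributions. The four products $p\cdot(-\frac{C_s}{2}p^{s-1})$, $p\cdot 2\zeta(s-1)p$, $p\cdot(-\frac{\zeta(s-3)}{3}p^3)$ and $-\frac{p^3}{6}\cdot 2\zeta(s-1)p$ produce exactly the three stated terms $-\frac{C_s}{2}p^s$, $2\zeta(s-1)p^2$ and $-\frac{\zeta(s-1)+\zeta(s-3)}{3}p^4$, and all remaining contributions are gathered into
\[
  R_{3,s}(p) = \frac{C_s}{12}p^{s+2} + \frac{\zeta(s-3)}{18}p^6 + p R_{2,s}(p) - \frac{p^3}{6}R_{2,s}(p) + S(p)A(p).
\]
This establishes the displayed expansion and, at the same time, exhibits $R_{3,s}$ as an explicit sum, which is the object I then differentiate.

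The second step is to estimate $R'_{3,s}(p)$ term by term as $p\searrow0$. The term $\frac{C_s}{12}p^{s+2}$ has derivative of order $p^{s+1}$. For the other four contributions I would use that $A(p)=\rmO(p)$ and $A'(p)=\rmO(1)$ near $0$ (which holds because $s>2$ makes the singular powers $p^{s-1}$ in $A$ and $p^{s-2}$ in $A'$ tend to $0$), together with the bounds on $R'_{2,s}$: the derivatives of $\frac{\zeta(s-3)}{18}p^6$, of $pR_{2,s}(p)$ (equal to $R_{2,s}(p)+pR'_{2,s}(p)$), and of $S(p)A(p)$ (equal to $S'(p)A(p)+S(p)A'(p)$) are all of order $p^5$ for $s\ne6$ and of order $p^5|\ln p|$ for $s=6$, while the derivative of $-\frac{p^3}{6}R_{2,s}(p)$ is of order $p^7$ (times $|\ln p|$ when $s=6$) and hence negligible.

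The final step is the case distinction, which is the crux of the statement. For $s\in(2,4)$ one has $s+1<5$, so $p^5=\rmo(p^{s+1})$ and the term $p^{s+1}$ dominates every $p^5$-contribution, giving $R'_{3,s}(p)=\rmO(p^{s+1})$. For $s\in(4,\infty)\setminus\{6\}$ one has $s+1>5$, so the $p^5$-contributions dominate while the non-integer power term is absorbed, yielding $R'_{3,s}(p)=\rmO(p^5)$. For $s=6$ the logarithmic factor in $R'_{2,6}$ survives in the dominant $p^5$-term and produces $R'_{3,6}(p)=\rmO(p^5|\ln p|)$. The main obstacle is precisely this bookkeeping: one must track how the non-integer power $p^{s+2}$—which arises solely from multiplying the singular term $-\frac{C_s}{2}p^{s-1}$ of $A$ by the $-\frac{p^3}{6}$ part of $\sin p$—changes rank relative to the integer remainder $p^6$ as $s$ crosses $4$, and one must keep the logarithmic refinement isolated in the resonant case $s=6$.
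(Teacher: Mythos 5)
Your proposal is correct and follows essentially the same route as the paper's proof: substituting the expansion of $2\Im\bigl(\Li_s(e^{ip})\bigr)$ from Lemma~\ref{le:asymp2} into $g_s(p)=\sin p\cdot 2\Im\bigl(\Li_s(e^{ip})\bigr)$ via $\sin p=p-\frac{p^3}{6}+\Rsin(p)$, collecting the three leading terms, and differentiating the explicit remainder (your grouping $S(p)A(p)$ versus the paper's expanded $\Rsin$-products is algebraically the same $R_{3,s}$). Your term-by-term derivative estimates, including the bounds $A(p)=\rmO(p)$, $A'(p)=\rmO(1)$ for $s>2$ and the rank comparison of $p^{s+1}$ against $p^5$ (with the logarithmic factor at $s=6$), match the paper's bookkeeping $R_{3,s}'(p)=\rmO(p^{s+1})+\rmO(p^5)+\rmO\bigl(R_{2,s}(p)\bigr)+\rmO\bigl(pR_{2,s}'(p)\bigr)$.
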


\begin{proof}
Write $\sin p = p-\frac{p^3}{6}+\Rsin(p)$.
From Lemma~\ref{le:asymp2} we obtain that
\begin{align*}
  g_s(p) &= 2\sin p\cdot\Im\bigl(\Li_s(e^{ip})\bigr) \\[1ex]
  &= \biggl[p-\frac{p^3}{6}+\Rsin(p)\biggr]
  \biggl[-\frac{C_s}{2}p^{s-1} + 2\zeta(s-1)p - \frac{\zeta(s-3)}{3}p^3 + R_{2,s}(p)\biggr] \\[1ex]
  &= - \frac{C_s}{2}p^s + 2\zeta(s-1)p^2 - \frac{\zeta(s-1)+\zeta(s-3)}{3}p^4 + R_{3,s}(p)
\end{align*}
where
\begin{align*}
  R_{3,s}(p) &= \frac{C_s}{12}p^{s+2} - \frac{C_s}{2}p^{s-1}\Rsin(p)
  + 2\zeta(s-1)p\Rsin(p) \\[1ex]
  &\quad + \frac{\zeta(s-3)}{18}p^6 - \frac{\zeta(s-3)}{3}p^3\Rsin(p)
  + \sin p\cdot R_{2,s}(p),
\end{align*}
which satisfies
\[
  R_{3,s}'(p) = \rmO(p^{s+1}) + \rmO(p^5) + \rmO\bigl(R_{2,s}(p)\bigr)
  + \rmO\bigl(pR_{2,s}'(p)\bigr).
\]
The latter relation yields \eqref{asympR2}.
\end{proof}

In the next proposition we consider the asymptotic behaviour of the
function $\lMs{s}$ around the origin.
In particular, we observe that the behaviour differs for the two cases
$s\in(2,4)$ and $s\in(4,\infty)$.
For the case when $s=4$ the behaviour is more complicated and involves
a logarithmic term; we do not consider this case in the following.

\begin{proposition}\label{prop:asymp_lMs}
Let $s\in(2,\infty)\setminus\{4\}$, let $\lMs{s}$ be as in \eqref{serieslMs}
and $C_s$ as in \eqref{defCs}.
Moreover, define
\begin{align*}
  h_{1,s}(p,q) &\defeq \begin{cases}
    C_s\dfrac{|p|^s-|q|^s}{p^2-q^2}\,, & |p|\ne|q|, \\[3ex]
    \dfrac{sC_s}{2}|p|^{s-2}, & |p|=|q|,
  \end{cases}
  \\[1ex]
  h_{2,s}(p,q) &\defeq \frac{\zeta(s-1)+2\zeta(s-3)}{3}(p^2+q^2).
\end{align*}
Then
\[
  \lMs{s}(p,q) = h_{1,s}(p,q) + h_{2,s}(p,q) + R_s(p,q),
  \qquad p,q\in[-\pi,\pi],
\]
where
\[
  R_s(p,q) = \rmO\bigl(p^\alpha+q^\alpha\bigr), \qquad p,q\to0,
\]
with
\[
  \alpha = \begin{cases}
    \min\{s,4\}, & s\ne6, \\[1ex]
    4-\eps, & s=6,
  \end{cases}
\]
with an arbitrary $\eps>0$.

In particular, we have
\[
  \lMs{s}(p,q) = \begin{cases}
    h_{1,s}(p,q) + \rmO(p^s+q^s), & s\in(2,4), \\[1ex]
    h_{2,s}(p,q) + \rmO(p^4+q^4), & s\in(4,\infty)\setminus\{6\}, \\[1ex]
    h_{2,s}(p,q) + \rmO(p^{4-\eps}+q^{4-\eps}), & s=6,
  \end{cases}
\]
as $p,q\to0$ with arbitrary $\eps>0$ when $s=6$.
\end{proposition}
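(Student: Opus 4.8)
The plan is to substitute the local expansions of Lemmas~\ref{le:asymp1} and~\ref{le:asymp3} into the representation
\[
  \lMs{s}(p,q) = 4\zeta(s-1) + \frac{g_s(p)-g_s(q)}{\cos p-\cos q},
  \qquad |p|\ne|q|,
\]
provided by Lemma~\ref{le:lMs}, and to sort the resulting terms by their order in $p,q$. Because $\lMs{s}$, $h_{1,s}$ and $h_{2,s}$ are all even in each variable, it suffices to work with $p,q>0$, $p\ne q$, near the origin; negative arguments are handled by symmetry and the diagonal $|p|=|q|$ by continuity at the very end. Here one uses that the diagonal value $\frac{sC_s}{2}|p|^{s-2}$ of $h_{1,s}$ is precisely the $q\to p$ limit of its off-diagonal difference quotient, so $h_{1,s}$, and hence $R_s=\lMs{s}-h_{1,s}-h_{2,s}$, is continuous.

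First I would insert $g_s(p)=-\frac{C_s}{2}p^s+2\zeta(s-1)p^2-\frac{\zeta(s-1)+\zeta(s-3)}{3}p^4+R_{3,s}(p)$ from Lemma~\ref{le:asymp3} and divide $g_s(p)-g_s(q)$ by $p^2-q^2$; the even polynomial powers collapse via $\frac{p^2-q^2}{p^2-q^2}=1$ and $\frac{p^4-q^4}{p^2-q^2}=p^2+q^2$, so that
\[
  \frac{g_s(p)-g_s(q)}{p^2-q^2}
  = 2\zeta(s-1) - \frac{C_s}{2}\,\frac{p^s-q^s}{p^2-q^2}
  - \frac{\zeta(s-1)+\zeta(s-3)}{3}(p^2+q^2) + E(p,q),
\]
where $E(p,q)\defeq\frac{R_{3,s}(p)-R_{3,s}(q)}{p^2-q^2}$. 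Multiplying by the factor $-2\bigl[1+\frac{1}{12}(p^2+q^2)+R_1(p,q)\bigr]$ of Lemma~\ref{le:asymp1} and restoring the summand $4\zeta(s-1)$, the principal terms drop out: $-2\cdot 2\zeta(s-1)$ cancels $4\zeta(s-1)$; the middle quotient becomes $C_s\frac{p^s-q^s}{p^2-q^2}=h_{1,s}(p,q)$; and the contributions to $p^2+q^2$ combine $\frac{2}{3}\bigl(\zeta(s-1)+\zeta(s-3)\bigr)$ with $-\frac{1}{3}\zeta(s-1)$ (this last from multiplying the constant $2\zeta(s-1)$ by $-\frac16(p^2+q^2)$) into the coefficient $\frac{\zeta(s-1)+2\zeta(s-3)}{3}$ of $h_{2,s}$.

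The main obstacle is to show that the leftover terms, collected into $R_s$, obey $\rmO(p^\alpha+q^\alpha)$ uniformly in all three ranges of $s$, and the decisive tool is Lemma~\ref{le:diff_quot}. Applied to $f=R_{3,s}$ with the derivative estimates~\eqref{asympR2}, it yields $E(p,q)=\rmO(p^s+q^s)$ for $s\in(2,4)$ (from $|R'_{3,s}(t)|\le Ct^{s+1}$), $E(p,q)=\rmO(p^4+q^4)$ for $s\in(4,\infty)\setminus\{6\}$ (from $|R'_{3,s}(t)|\le Ct^{5}$), and $E(p,q)=\rmO(p^{4-\eps}+q^{4-\eps})$ for $s=6$ after writing $p^5|\ln p|=\rmO(p^{5-\eps})$; these are exactly the asserted orders. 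Applied instead to $f(p)=p^s$ it gives $\frac{p^s-q^s}{p^2-q^2}=\rmO(p^{s-2}+q^{s-2})$, so the two extremal cross terms — the product of $\frac16(p^2+q^2)$ with this quotient (order $p^s+q^s$) and the product $2\zeta(s-1)R_1$ (order $p^4+q^4$) — together with all further products are controlled; since $p^4=\rmO(p^s)$ for $s<4$ and $p^s=\rmO(p^4)$ for $s>4$, every one of them is $\rmO(p^\alpha+q^\alpha)$ in the relevant regime. This proves $R_s=\rmO(p^\alpha+q^\alpha)$ off the diagonal, continuity extends it to $|p|=|q|$, and the three itemised special cases follow by inserting the respective value $\alpha=\min\{s,4\}$ (and $4-\eps$ when $s=6$) into the general estimate.
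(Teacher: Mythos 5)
Your proposal is correct and follows essentially the same route as the paper's proof: you substitute the expansions from Lemmas~\ref{le:asymp1} and~\ref{le:asymp3} into the representation of Lemma~\ref{le:lMs}, perform the same coefficient bookkeeping to produce $h_{1,s}$ and $h_{2,s}$, control the remainder by applying Lemma~\ref{le:diff_quot} to $p\mapsto p^s$ and to $R_{3,s}$ exactly as the paper does, and extend to the diagonal and to negative arguments by continuity and evenness. Your explicit check that the diagonal value $\frac{sC_s}{2}|p|^{s-2}$ of $h_{1,s}$ is the $q\to p$ limit of the off-diagonal quotient is a small detail the paper leaves implicit.
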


\begin{proof}
Let $p,q\in(0,\pi]$ such that $p\ne q$.
From Lemmas~\ref{le:lMs}, \ref{le:asymp1} and \ref{le:asymp3} we obtain
\begin{align*}
  & \lMs{s}(p,q) = 4\zeta(s-1) + \frac{g_s(p)-g_s(q)}{\cos p-\cos q} \\[1ex]
  &= 4\zeta(s-1) - \frac{2}{p^2-q^2}\biggl[1+\frac{1}{12}\bigl(p^2+q^2\bigr)
  +R_1(p,q)\biggr]\times
    \\[1ex]
  &\quad \times\biggl[-\frac{C_s}{2}(p^s-q^s) + 2\zeta(s-1)(p^2-q^2)
  - \frac{\zeta(s-1)+\zeta(s-3)}{3}\bigl(p^4-q^4\bigr)
    \\[1ex]
  &\qquad+ R_{3,s}(p) - R_{3,s}(q)\biggr]
    \displaybreak[0]\\[1ex]
  &= 4\zeta(s-1) + \biggl[1+\frac{1}{12}\bigl(p^2+q^2\bigr)+R_1(p,q)\biggr]
  \cdot\biggl[C_s\frac{p^s-q^s}{p^2-q^2}
    \\[1ex]
  &\quad - 4\zeta(s-1) + \frac{2}{3}\Bigl(\zeta(s-1)+\zeta(s-3)\Bigr)(p^2+q^2)
  - 2\frac{R_{3,s}(p)-R_{3,s}(q)}{p^2-q^2}\biggr]
    \\[1ex]
  &= C_s\frac{p^s-q^s}{p^2-q^2} + \biggl[\frac{2}{3}\Bigl(\zeta(s-1)+\zeta(s-3)\Bigr)
  -\frac{1}{3}\zeta(s-1)\biggr](p^2+q^2) + R_s(p,q),
\end{align*}
where
\begin{align*}
  R_s(p,q) &= C_s\biggl[\frac{1}{12}\bigl(p^2+q^2\bigr)+R_1(p,q)\biggr]
  \frac{p^s-q^s}{p^2-q^2}
    \\[1ex]
  &\quad + R_1(p,q)\biggl[-4\zeta(s-1)+\frac{2}{3}\Bigl(\zeta(s-1)+\zeta(s-3)\Bigr)
  (p^2+q^2)\biggr]
    \\[1ex]
  &\quad + \frac{1}{18}\Bigl(\zeta(s-1)+\zeta(s-3)\Bigr)(p^2+q^2)^2
    \\[1ex]
  &\quad - 2\biggl[1+\frac{1}{12}\bigl(p^2+q^2\bigr)+R_1(p,q)\biggr]
  \frac{R_{3,s}(p)-R_{3,s}(q)}{p^2-q^2}\,.
\end{align*}
It follows from Lemma~\ref{le:asymp3} that
\begin{equation}\label{asympR3s}
  R'_{3,s}(p) = \rmO(p^\beta) \qquad\text{where}\;\;
  \beta = \begin{cases}
    \min\{s+1,5\}, & s\ne6, \\[1ex]
    5-\eps, & s=6,
  \end{cases}
\end{equation}
for arbitrary $\eps>0$.
Lemma~\ref{le:diff_quot} implies that
\[
  \frac{p^s-q^s}{p^2-q^2} = \rmO\bigl(p^{s-2}+q^{s-2}\bigr), \qquad q,p\to0,\; p\ne q,
\]
and
\[
  \frac{R_{3,s}(p)-R_{3,s}(q)}{p^2-q^2} = \rmO(p^{\beta-1}), \qquad q,p\to0,\; p\ne q,
\]
where $\beta$ is as in \eqref{asympR3s}.
The error term $R_s$ satisfies
\[
  R_s(p,q) = \rmO\bigl(p^\alpha+q^\alpha\bigr), \qquad p,q\to0,\; p\ne q,
\]
where
\[
  \alpha = \begin{cases}
    \min\{s,4\}, & s\ne6, \\[1ex]
    4-\eps, & s=6,
  \end{cases}
\]
with an arbitrary $\eps>0$.
Since $\lMs{s}$, $h_{1,s}$ and $h_{2,s}$ are continuous and even in $p$ and $q$,
the result extends to all $p,q\in[-\pi,\pi]$.
\end{proof}

The next lemma is the key lemma about the long-time behaviour
of the solution of the Cauchy problem;
it is a generalization of \cite[Lemma~6.1]{PartI} to the two-dimensional setting.
It is more subtle than the one-dimensional case, but a further generalization to
$n$ dimensions is straightforward.

\begin{lemma}\label{le:general_asymp}
Let $\alpha>0$ and let $l:[-\pi,\pi]^2\to\RR$ be a continuous function that satisfies
\begin{equation}\label{lpos}
  \lns(p,q) > 0, \qquad (p,q)\in[-\pi,\pi]^2\setminus\{(0,0)\}
\end{equation}
and can be written as
\[
  \lns(p,q) = h(p,q) + R(p,q)
\]
where the continuous function $h:\RR^2\to\RR$ satisfies
\[
  h(rp,rq) = r^\alpha h(p,q), \qquad r>0,\;p,q\in\RR,
\]
and
\begin{equation}\label{remainder}
  R(p,q) = \rmo\bigl(|p|^\alpha+|q|^\alpha\bigr), \qquad p,q\to0.
\end{equation}
Define the function
\[
  f(x,y,t) \defeq \frac{1}{4\pi^2}\int\limits_{-\pi}^\pi \int\limits_{-\pi}^\pi
  e^{i(xp+yq)}e^{-t\lns(p,q)}\rd p\,\rd q,
  \qquad x,y\in\RR.
\]
Then
\begin{align}
  t^{\frac{2}{\alpha}}f\bigl(t^{\frac{1}{\alpha}}\xi,t^{\frac{1}{\alpha}}\eta,t\bigr)
  \to \frac{1}{4\pi^2} \int\limits_{-\infty}^\infty \int\limits_{-\infty}^\infty
  e^{i(\xi v+\eta w)}e^{-h(v,w)}\rd v\,\rd w \eqdef F(\xi,\eta),
    \hspace*{10ex} & \label{conv_lemma} \\[1ex]
  t\to\infty,\;\text{uniformly in}\;\; \xi,\eta\in\RR. & \notag
\end{align}
Hence
\begin{align}
  f(x,y) = t^{-\frac{2}{\alpha}}F\bigl(t^{-\frac{1}{\alpha}}x,t^{-\frac{1}{\alpha}}y\bigr)
  + \rmo\bigl(t^{-\frac{2}{\alpha}}\bigr),
    \hspace*{20ex} & \label{conv_lemma2} \\[1ex]
  t\to\infty,\; \text{uniformly in}\;\; x,y\in\RR. & \notag
\end{align}
\end{lemma}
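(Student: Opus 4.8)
The plan is to prove the limit \eqref{conv_lemma} by a rescaling of the integral defining $f$, followed by a dominated-convergence argument; the consequence \eqref{conv_lemma2} then follows by a change of variables. First I would substitute $p=t^{-1/\alpha}v$, $q=t^{-1/\alpha}w$, which gives
\[
  t^{\frac{2}{\alpha}}f\bigl(t^{\frac{1}{\alpha}}\xi,t^{\frac{1}{\alpha}}\eta,t\bigr)
  = \frac{1}{4\pi^2}\int_{-\pi t^{1/\alpha}}^{\pi t^{1/\alpha}}\int_{-\pi t^{1/\alpha}}^{\pi t^{1/\alpha}}
  e^{i(\xi v+\eta w)}e^{-t\,l(t^{-1/\alpha}v,\,t^{-1/\alpha}w)}\,\rd v\,\rd w,
\]
so that the factors $t^{\pm1/\alpha}$ cancel in the oscillatory exponent. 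The point of the scaling is that, by the homogeneity of $h$, one has $t\,h(t^{-1/\alpha}v,t^{-1/\alpha}w)=h(v,w)$ \emph{exactly}, while the remainder estimate \eqref{remainder} gives $t\,R(t^{-1/\alpha}v,t^{-1/\alpha}w)=\rmo(1)$ for each fixed $(v,w)$ as $t\to\infty$. Hence, writing the integral over $\RR^2$ against the indicator of the expanding square, the integrand converges pointwise on $\RR^2$ to $e^{i(\xi v+\eta w)}e^{-h(v,w)}$, the integrand of $F$.

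The crux is to produce a $t$-independent integrable majorant so that dominated convergence applies. For this I would first establish the lower bound $l(p,q)\ge c\,(p^2+q^2)^{\alpha/2}$ on $[-\pi,\pi]^2$ for some $c>0$. On the compact circle $\{p^2+q^2=1\}$ the continuous, degree-$\alpha$ homogeneous function $h$ is strictly positive (this is exactly what makes $e^{-h}$ integrable and $F$ finite), so it attains a positive minimum $m$ there; by homogeneity $h(p,q)\ge m(p^2+q^2)^{\alpha/2}$ on $\RR^2$, and since $R(p,q)=\rmo\bigl((p^2+q^2)^{\alpha/2}\bigr)$ this yields $l\ge \tfrac{m}{2}(p^2+q^2)^{\alpha/2}$ near the origin. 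Away from the origin $l$ is continuous and, by \eqref{lpos}, bounded below by a positive constant on the remaining compact region, where $(p^2+q^2)^{\alpha/2}$ is bounded; combining the two regions gives the global bound. Feeding it into the rescaled exponent yields $t\,l(t^{-1/\alpha}v,t^{-1/\alpha}w)\ge c\,(v^2+w^2)^{\alpha/2}$, so the integrand is dominated by $e^{-c(v^2+w^2)^{\alpha/2}}$, which is integrable over $\RR^2$ and independent of $t$; dominated convergence then gives convergence of the integrals.

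Uniformity in $\xi,\eta$ comes for free: since $|e^{i(\xi v+\eta w)}|=1$, the modulus of the difference between the rescaled integral and $F(\xi,\eta)$ is bounded, independently of $\xi$ and $\eta$, by
\[
  \frac{1}{4\pi^2}\int_{\RR^2}\Bigl|\chi_t(v,w)\,e^{-t\,l(t^{-1/\alpha}v,\,t^{-1/\alpha}w)}-e^{-h(v,w)}\Bigr|\,\rd v\,\rd w,
\]
where $\chi_t$ is the indicator of $[-\pi t^{1/\alpha},\pi t^{1/\alpha}]^2$; by the same domination this tends to $0$, so the supremum over $\xi,\eta$ tends to $0$. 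Finally, \eqref{conv_lemma2} follows from \eqref{conv_lemma} by setting $\xi=t^{-1/\alpha}x$, $\eta=t^{-1/\alpha}y$: for each fixed $t$ this is a bijection of $\RR^2$, so uniformity in $(\xi,\eta)$ transfers to uniformity in $(x,y)$, and multiplying through by $t^{-2/\alpha}$ turns the $\rmo(1)$ into $\rmo\bigl(t^{-2/\alpha}\bigr)$.

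I expect the main obstacle to be the lower bound $l(p,q)\ge c(p^2+q^2)^{\alpha/2}$, i.e.\ controlling $l$ uniformly near the origin; everything else (the scaling, the pointwise limit, and the passage from \eqref{conv_lemma} to \eqref{conv_lemma2}) is routine once the majorant is in place. The delicate point within that step is that the error term must be handled \emph{uniformly} in the direction $(v,w)/|(v,w)|$, not merely pointwise, which is why the homogeneous lower bound on $h$ and the comparability of $|p|^\alpha+|q|^\alpha$ with $(p^2+q^2)^{\alpha/2}$ are essential.
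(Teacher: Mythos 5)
Your proposal is correct and follows essentially the same route as the paper's proof: rescale $p=t^{-1/\alpha}v$, $q=t^{-1/\alpha}w$, establish a homogeneous lower bound on $\lns$ near the origin via the minimum of $h$ on a compact level set plus a compactness argument away from the origin (the paper uses $|p|^\alpha+|q|^\alpha$ in place of your equivalent $(p^2+q^2)^{\alpha/2}$, and it derives $h(p,q)>0$ off the origin from \eqref{lpos} through the scaling relation $\lns(rp,rq)=r^\alpha h(p,q)+\rmo(r^\alpha)$ rather than your parenthetical appeal to the finiteness of $F$), then apply dominated convergence, note that uniformity in $(\xi,\eta)$ is free because the bounding integrals do not involve $\xi,\eta$, and obtain \eqref{conv_lemma2} by the substitution $\xi=t^{-1/\alpha}x$, $\eta=t^{-1/\alpha}y$. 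The only cosmetic difference is that the paper splits the error into an integral over the expanding square plus a separate tail integral of $e^{-h(v,w)}$, whereas you fold both into a single dominated-convergence estimate using the indicator $\chi_t$.
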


\begin{proof}
Let us first show that there exists $C>0$ such that
\begin{equation}\label{llowerbd}
  \lns(p,q) \ge C\bigl(|p|^\alpha+|q|^\alpha\bigr),
  \qquad p,q\in[-\pi,\pi].
\end{equation}
For fixed $(p,q)\in\RR^2\setminus\{(0,0)\}$ we have
\[
  \lns(rp,rq) = r^\alpha h(p,q) + \rmo(r^\alpha), \qquad r\searrow0,
\]
which, together with \eqref{lpos} implies that $h(p,q)>0$
for $(p,q)\in\RR^2\setminus\{(0,0)\}$.
Set
\[
  C_1 \defeq \min_{|p|^\alpha+|q|^\alpha=1}h(p,q),
\]
which is a positive number.
Let $(p,q)\in\RR^2\setminus\{(0,0)\}$ and
set $r\defeq (|p|^\alpha+|q|^\alpha)^{\frac{1}{\alpha}}$.  Then
\[
  h(p,q) = h\Bigl(r\frac{p}{r},r\frac{q}{r}\Bigr)
  = r^\alpha h\Bigl(\frac{p}{r},\frac{q}{r}\Bigr)
  \ge C_1r^\alpha
\]
and hence
\begin{equation}\label{hlowerbd}
  h(p,q) \ge C_1\bigl(|p|^\alpha+|q|^\alpha\bigr),
  \qquad p,q\in\RR.
\end{equation}
Together with \eqref{remainder}, this implies that
\[
  \lns(p,q) \ge \frac{C_1}{2}\bigl(|p|^\alpha+|q|^\alpha\bigr),
  \qquad p,q\in\RR \;\;\text{such that}\;\; |p|^\alpha+|q|^\alpha \le r_0
\]
for some $r_0>0$.
Since $\lns$ is continuous and satisfies \eqref{lpos}, we obtain \eqref{llowerbd}.

For $\xi,\eta\in\RR$ and $t>0$ we can use the substitution $v=t^{\frac{1}{\alpha}}p$,
$w=t^{\frac{1}{\alpha}}q$ to obtain
\begin{align*}
  t^{\frac{2}{\alpha}}f\bigl(t^{\frac{1}{\alpha}}\xi,t^{\frac{1}{\alpha}}\eta,t\bigr)
  &= t^{\frac{2}{\alpha}}\frac{1}{4\pi^2} \int\limits_{-\pi}^\pi \int\limits_{-\pi}^\pi
  e^{it^{\frac{1}{\alpha}}(\xi p+\eta q)}e^{-t\lns(p,q)}\rd p\,\rd q
    \\[1ex]
  &= \frac{1}{4\pi^2}\int\limits_{-t^{\frac{1}{\alpha}}\pi}^{t^{\frac{1}{\alpha}}\pi}\;
  \int\limits_{-t^{\frac{1}{\alpha}}\pi}^{t^{\frac{1}{\alpha}}\pi}
  e^{i(\xi v+\eta w)}e^{-t\lns(t^{-\frac{1}{\alpha}}v,t^{-\frac{1}{\alpha}}w)}\rd v\,\rd w.
\end{align*}
Hence
\begin{align}
  & \Big|t^{\frac{2}{\alpha}}f\bigl(t^{\frac{1}{\alpha}}\xi,t^{\frac{1}{\alpha}}\eta,t\bigr)
  - F(\xi,\eta)\Big|
    \notag\\[1ex]
  &= \Bigg|\frac{1}{4\pi^2}\hspace*{-2ex}
  \iint\limits_{[-t^{\frac{1}{\alpha}}\pi,t^{\frac{1}{\alpha}}\pi]^2}
  \hspace*{-2ex} e^{i(\xi v+\eta w)}
  \Bigl(e^{-t\lns(t^{-\frac{1}{\alpha}}v,t^{-\frac{1}{\alpha}}w)}-e^{-h(v,w)}\Bigr)\rd v\,\rd w
    \notag\\[1ex]
  &\quad - \frac{1}{4\pi^2}\hspace*{-1ex}
  \iint\limits_{\RR^2\setminus[-t^{\frac{1}{\alpha}}\pi,t^{\frac{1}{\alpha}}\pi]^2}
  \hspace*{-1ex} e^{i(\xi v+\eta w)}e^{-h(v,w)}\rd v\,\rd w\Bigg|
    \displaybreak[0]\notag\\[1ex]
  &\le \frac{1}{4\pi^2}\iint\limits_{\RR^2}
  \chi_{[-t^{\frac{1}{\alpha}}\pi,t^{\frac{1}{\alpha}}\pi]^2}(v,w)
  \Big|e^{-t\lns(t^{-\frac{1}{\alpha}}v,t^{-\frac{1}{\alpha}}w)}-e^{-h(v,w)}\Bigr|\,\rd v\,\rd w
    \label{int1}\\[1ex]
  &\quad + \frac{1}{4\pi^2}\hspace*{-1ex}
  \iint\limits_{\RR^2\setminus[-t^{\frac{1}{\alpha}}\pi,t^{\frac{1}{\alpha}}\pi]^2}
  \hspace*{-2ex} e^{-h(v,w)}\rd v\,\rd w,
    \label{int2}
\end{align}
where $\chi_G$ is the characteristic function of a set $G\subseteq\RR^2$.
The integral in \eqref{int2} converges to $0$ as $t\to\infty$;
note that the integral in \eqref{int2} exists by \eqref{hlowerbd}.
From \eqref{llowerbd} and \eqref{hlowerbd} we obtain the
following estimate for the integrand in \eqref{int1}:
\begin{align*}
  & \chi_{[-t^{\frac{1}{\alpha}}\pi,t^{\frac{1}{\alpha}}\pi]^2}(v,w)
  \Big|e^{-t\lns(t^{-\frac{1}{\alpha}}v,t^{-\frac{1}{\alpha}}w)}-e^{-h(v,w)}\Bigr|
    \\[1ex]
  &\le \chi_{[-t^{\frac{1}{\alpha}}\pi,t^{\frac{1}{\alpha}}\pi]^2}(v,w)
  \Bigl(e^{-t\lns(t^{-\frac{1}{\alpha}}v,t^{-\frac{1}{\alpha}}w)}
  + e^{-h(v,w)}\Bigr)
    \\[1ex]
  &\le \chi_{[-t^{\frac{1}{\alpha}}\pi,t^{\frac{1}{\alpha}}\pi]^2}(v,w)
  \Bigl(e^{-tC(t^{-1}|v|^\alpha+t^{-1}|w|^\alpha)}+e^{-C_1(|v|^\alpha+|w|^\alpha)}\Bigr)
    \\[1ex]
  &\le e^{-C(|v|^\alpha+|w|^\alpha)} + e^{-C_1(|v|^\alpha+|w|^\alpha)},
\end{align*}
where the right-hand side is integrable on $\RR^2$ and independent of $t$.
For fixed $v,w\in\RR^2$ and large enough $t>0$ we have
\begin{align*}
  t\lns\bigl(t^{-\frac{1}{\alpha}}v,t^{-\frac{1}{\alpha}}w\bigr)
  &= th\bigl(t^{-\frac{1}{\alpha}}v,t^{-\frac{1}{\alpha}}w\bigr)
  + tR\bigl(t^{-\frac{1}{\alpha}}v,t^{-\frac{1}{\alpha}}w\bigr)
    \\[1ex]
  &= h(v,w) + t\rmo\bigl(t^{-1}\bigl(|v|^\alpha+|w|^\alpha\bigr)
  \to h(v,w) \qquad \text{as}\;\; t\to\infty.
\end{align*}
Hence the integrand in \eqref{int1} converges to $0$ pointwise as $t\to\infty$.
Now the Dominated Convergence Theorem implies that the integral in \eqref{int1}
converges to $0$ as $t\to\infty$.
Since the integrals in \eqref{int1} and \eqref{int2} are independent of $\xi$ and $\eta$,
the convergence in \eqref{conv_lemma} is uniform in $\xi$ and $\eta$.

The relation in \eqref{conv_lemma2} follows easily from \eqref{conv_lemma}
by using the substitution $x=t^{\frac{1}{\alpha}}\xi$, $y=t^{\frac{1}{\alpha}}\eta$.
\end{proof}

The next theorem is the main result of the paper.
It contains the long-time behaviour of the solution of the Cauchy problem
corresponding to the Mellin-transformed $k$-path Laplacian.
It shows, in particular, that, for $s\in(2,4)$, the solution exhibits
superdiffusive behaviour whereas for $s>4$ one has normal diffusion.

\begin{theorem}\label{th:super_diffusion}
Let $\Gamma=(V,E)$ be the square lattice as described at the beginning
of Section~\ref{sec:time_evolution}, let $s>2$, $s\ne4$, and let $\LMs{s}$
be the Mellin-transformed $k$-path Laplacian defined in \eqref{deftLM}.
Let $u$ be the solution in \eqref{sole0} of \eqref{cauchyLMs1}, \eqref{cauchyLMs2}
with $\mr u=\bfe_{0,0}$, where $\bfe_{0,0}$ is defined in \eqref{defemn}.
Then
\[
  u_{x,y}(t) = t^{-\frac{2}{\alpha}}F_s\bigl(t^{-\frac{1}{\alpha}}x,t^{-\frac{1}{\alpha}}y\bigr)
  + \rmo\bigl(t^{-\frac{2}{\alpha}}\bigr),
  \qquad t\to\infty,\; \text{uniformly in}\;\; x,y\in\ZZ,
\]
where in the case $s\in(2,4)$,
\[
  \alpha=s-2 \qquad\text{and}\qquad
  F_s(\xi,\eta) \defeq \frac{1}{4\pi^2}\int\limits_{-\infty}^\infty \int\limits_{-\infty}^\infty
  e^{i(\xi v+\eta w)}e^{-h_{1,s}(v,w)}\rd v\,\rd w
\]
with $h_{1,s}$ from Proposition~\ref{prop:asymp_lMs},
and in the case $s\in(4,\infty)$,
\[
  \alpha = 2 \qquad\text{and}\qquad
  F_s(\xi,\eta) \defeq \frac{1}{4\pi\gamma_s}e^{-\frac{\xi^2+\eta^2}{4\gamma_s}}
\]
with
\[
  \gamma_s = \frac{\zeta(s-1)+2\zeta(s-3)}{3}\,.
\]
(See Figures~\ref{fig:s_dep} and \ref{fig:Fs}.)
\end{theorem}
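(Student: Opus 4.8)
The plan is to deduce the theorem directly from the key Lemma~\ref{le:general_asymp} applied to $l=\lMs{s}$: the solution \eqref{sole0} is precisely the function $f(x,y,t)$ of that lemma, so it suffices to put $\lMs{s}$ into the form required there. The two structural hypotheses of the lemma that do not depend on the regime of $s$ are verified once and for all by Lemma~\ref{le:lMs}: the function $\lMs{s}$ is continuous, and its positivity \eqref{lMspos} is exactly condition \eqref{lpos}. It then remains, in each of the two regimes, to split $\lMs{s}=h+R$ into an $\alpha$-homogeneous continuous part $h$ and a remainder $R=\rmo(|p|^\alpha+|q|^\alpha)$, read off from the expansion $\lMs{s}=h_{1,s}+h_{2,s}+R_s$ of Proposition~\ref{prop:asymp_lMs}, and to apply the lemma. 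Since the lemma furnishes uniform convergence over all $x,y\in\RR$, the asserted uniformity over $x,y\in\ZZ$ is immediate by restriction.

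For $s\in(2,4)$ I would take $\alpha=s-2$ and $h=h_{1,s}$. Homogeneity of degree $s-2$ is immediate from the defining formula for $h_{1,s}$, since replacing $(p,q)$ by $(rp,rq)$ scales the numerator $|p|^s-|q|^s$ by $r^s$ and the denominator $p^2-q^2$ by $r^2$; continuity (across $|p|=|q|$ and at the origin) is part of Proposition~\ref{prop:asymp_lMs}. The remainder is then $h_{2,s}+R_s$, where $h_{2,s}=\rmO(p^2+q^2)$ and $R_s=\rmO(p^s+q^s)$; as the exponents $2$ and $s$ both strictly exceed $\alpha=s-2$, the sum is $\rmo(|p|^{s-2}+|q|^{s-2})$, as the lemma requires. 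The lemma then yields exactly the stated asymptotics, with $\alpha=s-2$ and $F_s$ the Fourier transform of $e^{-h_{1,s}}$.

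For $s\in(4,\infty)$ I would instead take $\alpha=2$ and $h=h_{2,s}=\gamma_s(p^2+q^2)$, which is manifestly $2$-homogeneous. Here the leading term $h_{1,s}$ is genuinely nonzero, but being $(s-2)$-homogeneous with $s-2>2$ it is $\rmO(p^{s-2}+q^{s-2})=\rmo(p^2+q^2)$; adding $R_s=\rmO(p^4+q^4)$ for $s\ne6$ (respectively $\rmO(p^{4-\eps}+q^{4-\eps})$ for $s=6$), whose exponents again exceed $2$, the total remainder $h_{1,s}+R_s$ is $\rmo(p^2+q^2)$. The lemma then produces $F_s(\xi,\eta)=\frac{1}{4\pi^2}\int_{-\infty}^\infty\int_{-\infty}^\infty e^{i(\xi v+\eta w)}e^{-\gamma_s(v^2+w^2)}\rd v\,\rd w$, which factorizes into two Gaussian integrals; using $\int_{-\infty}^\infty e^{i\xi v}e^{-\gamma_s v^2}\rd v=\sqrt{\pi/\gamma_s}\,e^{-\xi^2/(4\gamma_s)}$ collapses this to the claimed Gaussian $F_s(\xi,\eta)=\frac{1}{4\pi\gamma_s}e^{-(\xi^2+\eta^2)/(4\gamma_s)}$. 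The positivity of $\gamma_s$, which is needed for convergence of the integral, follows from $\zeta$ being positive on $(1,\infty)$.

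Since the genuine analytic content — the tail estimates and the dominated-convergence argument that make the rescaled integrals converge — already resides in Lemma~\ref{le:general_asymp}, I do not expect a substantial obstacle at this final stage. The one point that demands care is the bookkeeping of orders: one must confirm in each regime that every non-homogeneous contribution has order strictly larger than $\alpha$, so that the $\rmO$-bounds of Proposition~\ref{prop:asymp_lMs} genuinely upgrade to the $\rmo(|p|^\alpha+|q|^\alpha)$ hypothesis of the lemma. In particular one must be careful, for $s\in(4,\infty)$, to assign $h_{1,s}$ to the remainder rather than to the homogeneous leading part, and to note that the distinguished value $s=4$ (where the two homogeneous scales $p^{s-2}$ and $p^2$ coincide and a logarithmic term appears) is correctly excluded from the hypotheses.
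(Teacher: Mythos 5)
Your proposal is correct and follows essentially the same route as the paper: the paper's proof likewise consists of verifying the hypotheses of Lemma~\ref{le:general_asymp} via Lemma~\ref{le:lMs} and Proposition~\ref{prop:asymp_lMs}, with $h=h_{1,s}$, $\alpha=s-2$ for $s\in(2,4)$ and $h=h_{2,s}$, $\alpha=2$ for $s\in(4,\infty)$. You merely spell out details the paper leaves implicit (the homogeneity check, the bookkeeping showing $h_{1,s}=\rmo(p^2+q^2)$ when $s>4$, and the Gaussian integral yielding $F_s$), all of which are correct.
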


\begin{proof}
By Proposition~\ref{prop:asymp_lMs} and Lemma~\ref{le:lMs} the function $\lMs{s}$
satisfies the assumptions of Lemma~\ref{le:general_asymp}
with $h=h_{1,s}$ and $\alpha=s-2$ when $s\in(2,4)$
and with $h=h_{2,s}$ and $\alpha=2$ when $s\in(4,\infty)$.
Hence all claims follow from Lemma~\ref{le:general_asymp}.
\end{proof}

\begin{center}
\begin{figure}[ht]
\includegraphics[width=.50\textwidth]{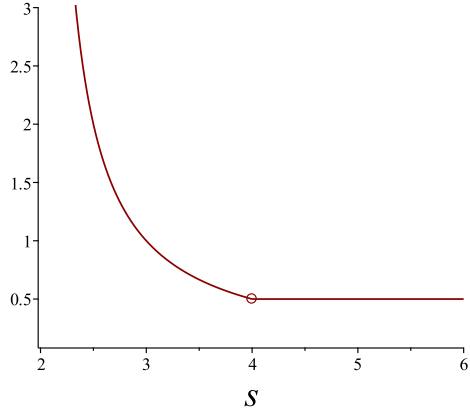}
\caption{The $s$-dependance of $\frac{1}{\alpha}$.}
\label{fig:s_dep}
\end{figure}
\end{center}

\begin{figure}[ht]
\centering
\begin{tabular}{cc}
\hspace*{-5mm}\includegraphics[width=65mm]{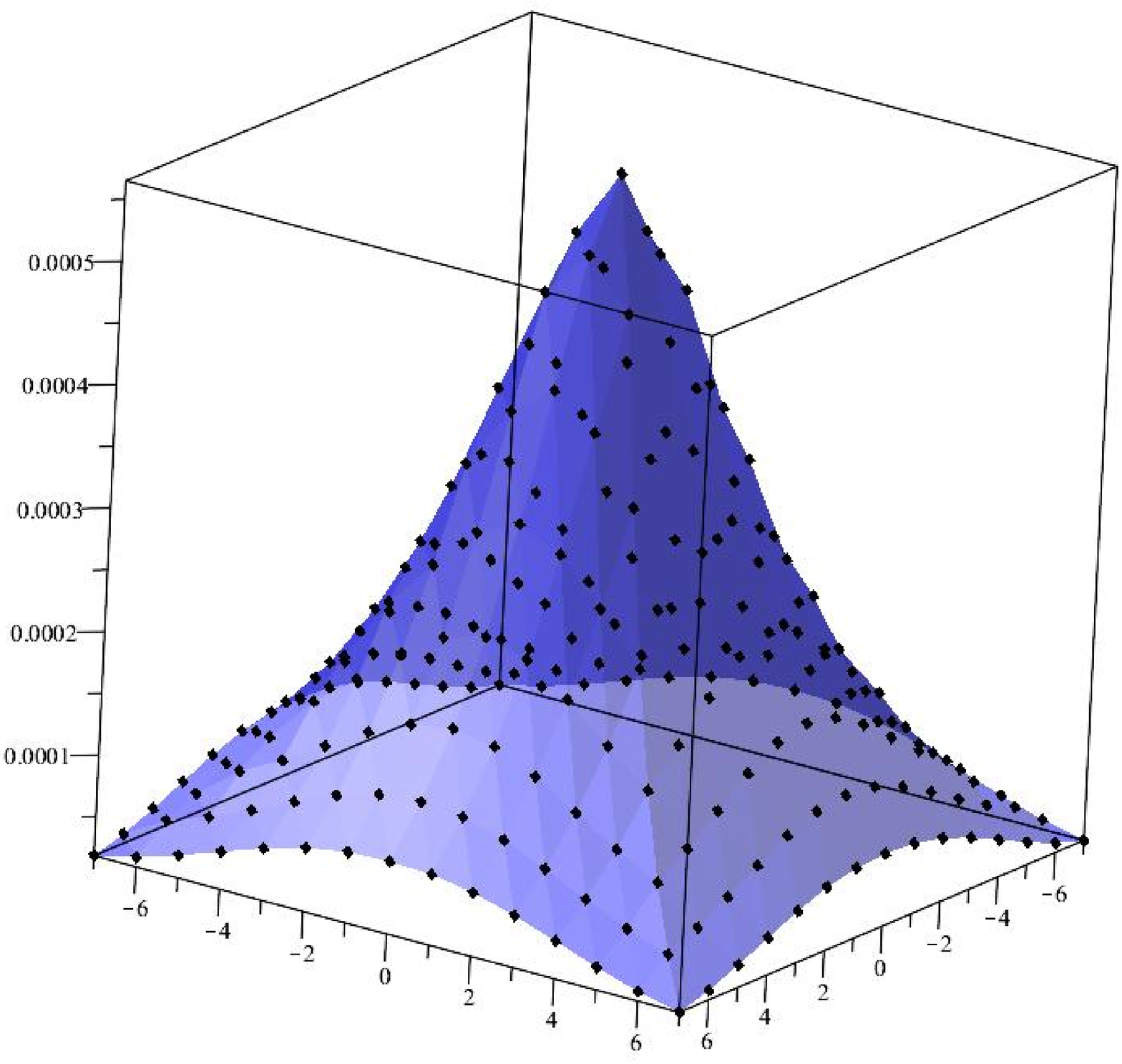}\hspace*{-5mm}
&
\hspace*{-5mm}\includegraphics[width=75mm]{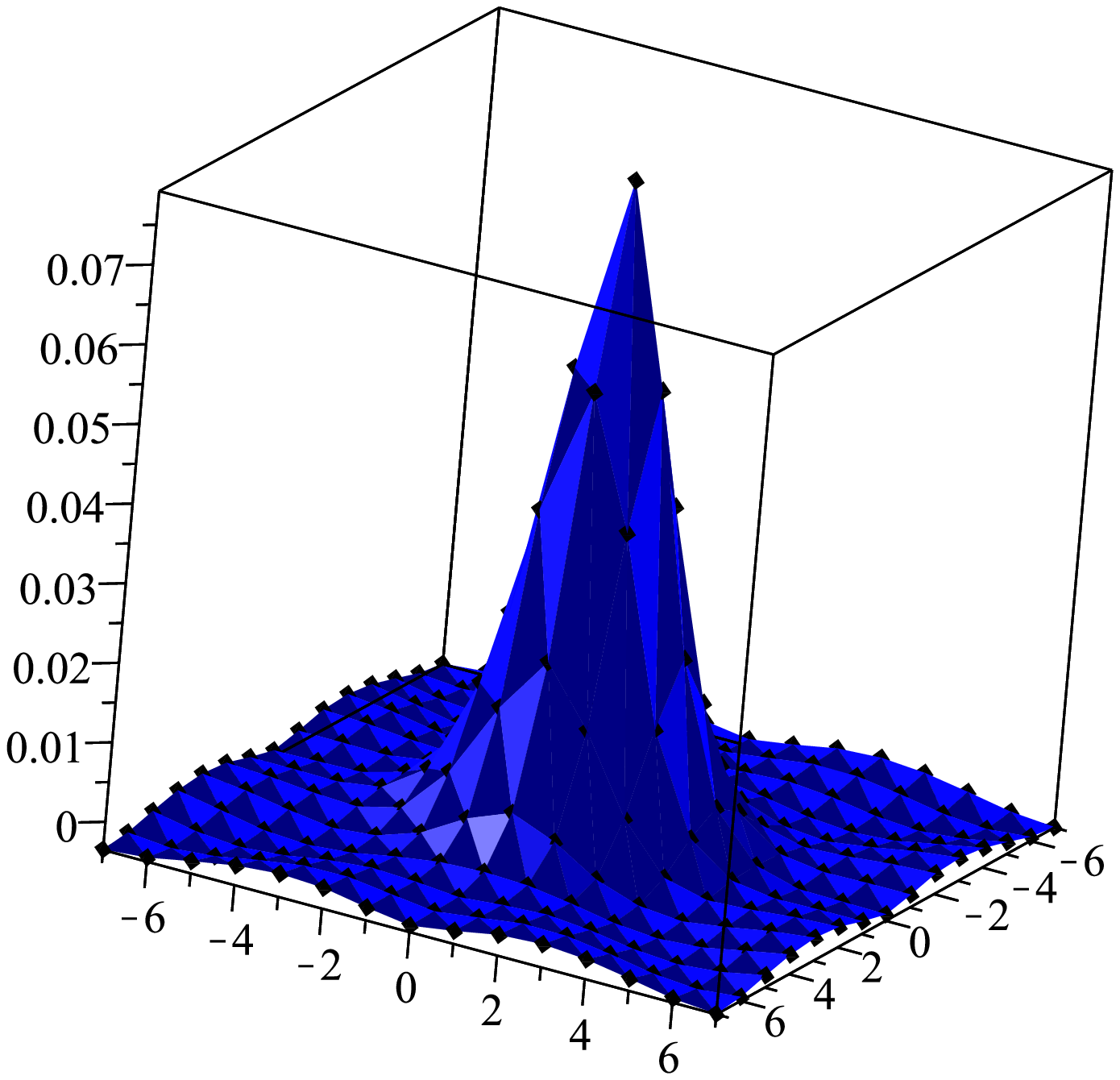}\hspace*{-5mm}
\\
(a) $s=2.5$ & (b) $s=6$
\end{tabular}
\label{krata}
\caption{The graph of the limit solution $F_s$ from
Theorem~\ref{th:super_diffusion} for $s=2.5$ (a) and $s=6$ (b), respectively.}
\label{fig:Fs}
\end{figure}

\begin{remark}
Theorem~\ref{th:super_diffusion} shows that the distribution
spreads proportionally to $t^{\frac{1}{\alpha}}$ where $\alpha$ is as
in that theorem; cf.\ \cite[Remark~6.2]{PartI}.
When $s>4$, one has normal diffusion since the profile spreads
proportionally to $t^{\frac{1}{2}}$ in this case.
When $2<s<4$, however, we observe superdiffusion because then
the spread of the profile is proportional to $t^\kappa$
with $\kappa=\frac{1}{s-2}>\frac{1}{2}$.
In particular, when $s=3$, then the profile spreads linearly in time,
which is a ballistic behaviour.

One can measure the spread, e.g.\ with the full width at half maximum (FWHM),
which, for our purpose, we can define as
\begin{align*}
  \FWHM(t) \defeq 2\sup\biggl\{r>0: \; & u_{x,y}(t) \le \frac{1}{2}u_{0,0}(t) \\
  &\text{for all}\;\; x,y\in\ZZ \;\;\text{with}\;\; |x|^2+|y|^2\ge r^2\biggr\}.
\end{align*}
One can show that $\FWHM(t) \sim ct^{\frac{1}{\alpha}}$ as $t\to\infty$
with some $c>0$; cf.\ \cite[Remark~6.2]{PartI} for the one-dimensional case.
\end{remark}

\begin{remark}\label{re:finite_truncations}
Let us consider finite truncations of the Mellin transformation \eqref{deftLM}
of the $k$-path Laplacian, i.e.\ set
\[
  \LMsN{s}{N} \defeq \sum_{k=1}^N \frac{1}{k^s}L_k
\]
for $N\in\NN$.
By Lemma~\ref{le:lk} this operator is unitarily equivalent to the operator of
multiplication by the function
\[
  \lMsN{s}{N}(p,q) = \sum_{k=1}^N \frac{1}{k^s}\lk(p,q)
\]
where $\lk$ is defined in that lemma.
Using Lemmas~\ref{le:diff_quot} and \ref{le:asymp1} one can show in
a similar way as above that
\[
  \lk(p,q) = \frac{2k^3+k}{3}(p^2+q^2) + \rmO(p^4+q^4), \qquad p,q\to0
\]
and hence
\[
  \lMsN{s}{N}(p,q) = \sum_{k=1}^N \frac{2k^3+k}{3k^s}(p^2+q^2) + \rmO(p^4+q^4),
  \qquad p,q\to0.
\]
This leads to normal diffusion by Lemma~\ref{le:general_asymp}
and not to a superdiffusive process like in the non-truncated Mellin transformation.
However, the diffusion speed and the variance of the limiting normal distribution
grow with $N$, e.g.\
if one measures the former with the full width at half maximum, one gets
\begin{equation}\label{fwhm_finiteN}
  \FWHM(t) \sim 2\Biggl((\ln 2)\sum_{k=1}^N \frac{2k^3+k}{3k^s}\Biggr)^{\frac{1}{2}}
  t^{\frac{1}{2}}, \qquad t\to\infty;
\end{equation}
cf.\ \cite[Remark~6.4]{PartI}.
As $N\to\infty$ one has the following behaviour,
\begin{alignat*}{3}
  \sum_{k=1}^N \frac{(2k^2+1)k}{3k^s} &\sim \frac{2}{3(4-s)}N^{4-s},
  \qquad & & N\to\infty, \qquad & &\text{if}\;\; s\in(2,4),
    \\[1ex]
  \sum_{k=1}^N \frac{(2k^2+1)k}{3k^s} &\to \frac{2\zeta(s-3)+\zeta(s-1)}{3}\,,
  \qquad & & N\to\infty, \qquad & &\text{if}\;\; s\in(4,\infty).
\end{alignat*}
When $s>4$ (i.e.\ when $u$ in Theorem~\ref{th:super_diffusion} shows
normal diffusion), the coefficient in \eqref{fwhm_finiteN} converges to the
corresponding coefficient for $u$ as $N\to\infty$,
and the limiting normal distributions converge to the limiting distribution
from Theorem~\ref{th:super_diffusion}.
On the other hand, when $s\in(2,4)$, the coefficient in \eqref{fwhm_finiteN}
diverges as $N\to\infty$.
So, although one has normal diffusion for every finite $N$, the speed of
the diffusion --- and also the variance of the limiting normal distribution --- can
be made arbitrarily large if $N$ is chosen big enough.
\end{remark}

\end{document}